
\pdfoutput=1
\documentclass[10pt,oneside]{amsart}
\usepackage[
paper=a4paper,
headsep=20pt,text={136mm,208mm},centering,includehead
]{geometry}
\usepackage{graphicx}
\usepackage[dvipsnames,usenames]{xcolor}
\usepackage[colorlinks=true,urlcolor=ForestGreen,linkcolor=ForestGreen,citecolor=ForestGreen]{hyperref}
\hypersetup{
	linkbordercolor={1 0 0}, 
	citebordercolor={0 1 0} 
}
\usepackage{cite}
\usepackage{wasysym}
\usepackage{amssymb,mathtools,mathrsfs,stmaryrd,mwe}
\usepackage{bbm}
\usepackage[UKenglish]{babel}
\usepackage[noabbrev]{cleveref}
\usepackage{tikz}
\usetikzlibrary{trees,decorations.pathmorphing,decorations.markings,matrix,shapes}
\usepackage[backgroundcolor=green,linecolor=green,bordercolor=green]{todonotes}
\usepackage{enumitem}
\usepackage{soul} 
\setstcolor{red}

\usepackage{libertine}
\usepackage[vvarbb,bigdelims,cmintegrals,libertine]{newtxmath}


\iftrue
\makeatletter
\def\@settitle{%
	\vspace*{-20pt}
	\begin{flushleft}%
		\baselineskip14\p@\relax
		\normalfont\bfseries\LARGE
		\@title
	\end{flushleft}%
}
\def\@setauthors{%
	\begingroup
	\def\thanks{\protect\thanks@warning}%
	\trivlist
	\large \@topsep30\p@\relax
	\advance\@topsep by -\baselineskip
	\item\relax
	\author@andify\authors
	\def\\{\protect\linebreak}%
	\authors
	\ifx\@empty\contribs
	\else
	,\penalty-3 \space \@setcontribs
	\@closetoccontribs
	\fi
	\normalfont
	\@setaddresses
	\endtrivlist
	\endgroup
}
\def\@setaddresses{\par
	\nobreak \begingroup\raggedright
	\small
	\def\author##1{\nobreak\addvspace\smallskipamount}%
	\def\\{\unskip, \ignorespaces}%
	\interlinepenalty\@M
	\def\address##1##2{\begingroup
		\par\addvspace\bigskipamount\noindent
		\@ifnotempty{##1}{(\ignorespaces##1\unskip) }%
		{\ignorespaces##2}\par\endgroup}%
	\def\curraddr##1##2{\begingroup
		\@ifnotempty{##2}{\nobreak\noindent\curraddrname
			\@ifnotempty{##1}{, \ignorespaces##1\unskip}\/:\space
			##2\par}\endgroup}%
	\def\email##1##2{\begingroup
		\@ifnotempty{##2}{\smallskip\nobreak\noindent %
			\@ifnotempty{##1}{, \ignorespaces##1\unskip}
			\ttfamily##2\par}\endgroup}%
	\def\urladdr##1##2{\begingroup
		\def~{\char`\~}%
		\@ifnotempty{##2}{\nobreak\noindent\urladdrname
			\@ifnotempty{##1}{, \ignorespaces##1\unskip}\/:\space
			\ttfamily##2\par}\endgroup}%
	\addresses
	\endgroup
	\global\let\addresses=\@empty
}
\def\@setabstracta{%
	\ifvoid\abstractbox
	\else
	\skip@25\p@ \advance\skip@-\lastskip
	\advance\skip@-\baselineskip \vskip\skip@
	\box\abstractbox
	\prevdepth\z@ 
	\vskip-15pt
	\fi
}
\renewenvironment{abstract}{%
	\ifx\maketitle\relax
	\ClassWarning{\@classname}{Abstract should precede
		\protect\maketitle\space in AMS document classes; reported}%
	\fi
	\global\setbox\abstractbox=\vtop \bgroup
	\normalfont\small
	\list{}{\labelwidth\z@
		\leftmargin0pc \rightmargin\leftmargin
		\listparindent\normalparindent \itemindent\z@
		\parsep\z@ \@plus\p@
		
	}%
	\item[\hskip\labelsep\bfseries\abstractname.]%
}{%
	\endlist\egroup
	\ifx\@setabstract\relax \@setabstracta \fi
}

\def\ps@headings{\ps@empty
	\def\@evenhead{%
		\setTrue{runhead}%
		\normalfont\scriptsize
		\rlap{\thepage}\hfill
		\def\thanks{\protect\thanks@warning}%
		\leftmark{}{}}%
	\def\@oddhead{%
		\setTrue{runhead}%
		\normalfont\scriptsize
		\def\thanks{\protect\thanks@warning}%
		\rightmark{}{}\hfill \llap{\thepage}}%
	\let\@mkboth\markboth
}\ps@headings

\def\section{\@startsection{section}{1}%
	\z@{-1.2\linespacing\@plus-.5\linespacing}{.8\linespacing}%
	{\normalfont\bfseries\Large}}
\def\subsection{\@startsection{subsection}{2}%
	\z@{-.8\linespacing\@plus-.3\linespacing}{.3\linespacing\@plus.2\linespacing}%
	{\normalfont\bfseries\large}}
\def\subsubsection{\@startsection{subsubsection}{3}%
	\z@{.7\linespacing\@plus.1\linespacing}{-1.5ex}%
	{\normalfont\bfseries}}
\def\@secnumfont{\bfseries}
\makeatother
\fi 


\makeatother

\newtheorem{theorem}{Theorem}
\newtheorem*{theorem*}{Theorem}
\newtheorem*{corollary*}{}
\newtheorem{proposition}[theorem]{Proposition}
\newtheorem{corollary}[theorem]{Corollary}

\newtheorem*{question}{Question}
\newtheorem{example}[theorem]{Example}

\theoremstyle{definition}

\newenvironment{definition}
{\pushQED{\qed}\defin}
{\popQED\enddefin}

\makeatletter

\providecommand{\proofname}{Proof}
\makeatother




\newcommand{\Z}{\mathbb{Z}}

\begin{document}
	
\vspace*{-40pt}
\title{Minimal crossing number implies minimal supporting genus}
	
\author{Hans U.\ Boden}
\address{
Mathematics \& Statistics, McMaster University, Hamilton, Ontario
}
\email{\href{mailto:boden@mcmaster.ca}{boden@mcmaster.ca}, \href{mailto:will.rushworth@math.mcmaster.ca}{will.rushworth@math.mcmaster.ca}}

\author{William Rushworth}
	
\def\subjclassname{\textup{2020} Mathematics Subject Classification}
\expandafter\let\csname subjclassname@1991\endcsname=\subjclassname
\expandafter\let\csname subjclassname@2000\endcsname=\subjclassname
\subjclass{57K12}
	
\keywords{virtual links, link parity}
	
\begin{abstract}
	A virtual link may be defined as an equivalence class of diagrams, or alternatively as a stable equivalence class of links in thickened surfaces. We prove that a minimal crossing virtual link diagram has minimal genus across representatives of the stable equivalence class. This is achieved by constructing a new parity theory for virtual links. As corollaries, we prove that the crossing, bridge, and ascending numbers of a classical link do not decrease when it is regarded as a virtual link. This extends corresponding results in the case of virtual knots due to Manturov and Chernov.
\end{abstract}
	
\maketitle

\section{Introduction}\label{Sec:intro}
In this note we prove that a minimal crossing diagram of a virtual link minimises the genus of surfaces \( \Sigma \) such that the link possesses a representative in \( \Sigma \times I \). This extends the corresponding result in the case of virtual knots due to Manturov \cite{Manturov2010,Manturov13}. Our result affirmatively answers a basic question, open since the inception of virtual knot theory: is it possible to simultaneously minimize the complexity of a diagram and the complexity of the surface supporting it? In fact, we establish a stronger result: a minimal crossing diagram is automatically of minimal genus. 

A virtual link is an equivalence class of virtual link diagrams, up to the generalised Reidemeister moves \cite[Section 2]{Kauffman1998}. The \emph{classical crossing number} of a virtual link is the minimal number of classical crossings, taken over all diagrams of the link.

Equivalently to the diagrammatic formulation, a virtual link may be defined as an equivalence class of smooth embeddings \( \bigsqcup S^1 \hookrightarrow \Sigma \times I \), for \( \Sigma \) a closed orientable surface, up to self-diffeomorphism and (de)stabilization of \( \Sigma \times I \) \cite[Section 3]{Carter2000,Kauffman1998}. This last operation is the addition or removal of a \(1\)-handle of \( \Sigma \) disjoint to the embedding. The \emph{supporting genus} of a virtual link is the minimal genus of a surface \( \Sigma \) such that the link possess a representative in \( \Sigma \times I \).

Let \( D \) be a diagram of the virtual link \( L \). As described in \Cref{Sec:apps}, to \( D \) there is a naturally associated representative of \( L \) in \( \Sigma \times I \); the particular surface \( \Sigma \) produced in this way is known as the \emph{Carter surface of \( D \)}. A diagram of a virtual link \( L \) is said to be \emph{minimal genus} if its Carter surface has genus equal to the supporting genus of \( L \).

\begin{theorem}\label{Thm:main}
	Let \( D \) be a virtual link diagram. If \( D \) is of minimal classical crossing number then it is a minimal genus diagram.
\end{theorem}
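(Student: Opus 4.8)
The plan is to reduce the statement to a comparison of two genus quantities and to bridge them with the new parity theory. Write $g(\Sigma_D)$ for the genus of the Carter surface of a diagram $D$, and $g_{\min}(L)$ for the supporting genus of $L$. Since the Carter surface is one representative of the stable equivalence class, one always has $g_{\min}(L) \le g(\Sigma_D)$; thus it suffices to produce, for a minimal crossing diagram $D$, the reverse inequality $g(\Sigma_D) \le g_{\min}(L)$. I would obtain this by squeezing $g(\Sigma_D)$ and $g_{\min}(L)$ between a common parity-theoretic lower bound.

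First I would record the Euler-characteristic description of the Carter surface: for a connected diagram with $n$ classical crossings the construction of \Cref{Sec:apps} gives $2 - 2g(\Sigma_D) = n - 2n + F$, where $F$ is the number of complementary regions, so that $g(\Sigma_D) = 1 + (n - F)/2$. This makes explicit that lowering the crossing number and lowering the genus are linked only through the combinatorics of the complementary regions, and that no naive Euler-characteristic count can prove the theorem on its own.

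The core of the argument is the new parity. To each classical crossing of a virtual link diagram I would assign a value in $\Z/2$, refining the Gaussian parity (which is not well defined across distinct components), and verify the parity axioms against the three Reidemeister moves so that the total odd-crossing data descends to an invariant of $L$. The intended output is a parity-derived quantity $p(L)$ with two properties: it is a lower bound, $2\,g_{\min}(L) \ge p(L)$, because odd crossings are homologically essential in any supporting surface and cannot be cancelled by Reidemeister moves; and it is computed exactly by the Carter surface, $2\,g(\Sigma_D) = p(L)$, precisely when $D$ carries no superfluous (cancellable) crossings. A minimal crossing diagram has no cancellable crossings, so both properties apply to it simultaneously, giving
$$ 2\,g_{\min}(L) \ge p(L) = 2\,g(\Sigma_D) \ge 2\,g_{\min}(L), $$
and hence $g(\Sigma_D) = g_{\min}(L)$, which is the assertion of \Cref{Thm:main}.

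I expect the main obstacle to be the construction and naturality of the parity for \emph{mixed} crossings, i.e.\ crossings between two distinct components, where the interlacement count underlying Gaussian parity breaks down; handling these is exactly what forces a genuinely new theory rather than a componentwise application of the knot case. A secondary difficulty is establishing that minimal crossing number really does rule out the cancellable configurations, so that the sharp equality $2\,g(\Sigma_D) = p(L)$ holds; this is where the hypothesis of \Cref{Thm:main} must be used in full, and where the reduction to the results of Manturov and Chernov is delicate, since their arguments do not see the inter-component linking.
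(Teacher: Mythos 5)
Your overall reduction (show $g(\Sigma_D) \le g_{\min}(L)$ for a minimal crossing diagram) is the right target, but the mechanism you propose for achieving it does not exist in the form you describe, and the two properties you assign to your quantity $p(L)$ are precisely the unproved content. You posit a single parity-derived numerical invariant $p(L)$ of the link satisfying $2g_{\min}(L) \ge p(L)$ and $2g(\Sigma_D) = p(L)$ for diagrams without ``cancellable'' crossings, but you give no construction of $p(L)$, no definition of ``cancellable,'' and no argument for either inequality; the assertion that ``odd crossings are homologically essential in any supporting surface'' is not meaningful until the parity itself is defined, and the claim that minimal crossing number rules out all cancellable configurations is exactly the kind of statement that is false for naive notions of cancellation (minimal crossing diagrams of virtual links need not be reduced in any local sense). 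Moreover, the parity constructed in the paper is \emph{not} an intrinsic function of the diagram: it depends on a choice of essential simple closed curve $\gamma$ on the Carter surface (supplied by Kuperberg's theorem when the Carter surface is not of minimal genus) and a choice of $\gamma$-colouring, and it is only well defined along a specific sequence of Reidemeister moves. There is no canonical global parity of the sort your argument requires, and indeed the paper emphasises that the homological parity for knots does not extend to links in such an intrinsic way.

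The paper's actual route is a descent argument rather than a numerical squeeze. One proves (Theorem~\ref{Thm:sub}) that every diagram of $L$ has a \emph{subdiagram} (obtained by converting some classical crossings to virtual ones) that still represents $L$ and is of minimal genus: if the Carter surface is not minimal, Kuperberg's theorem produces a destabilizing curve $\gamma$, the associated $\gamma$-colouring gives a parity along the move sequence, and parity projection (replacing odd crossings by virtual ones) yields a \emph{proper} subdiagram representing the same link; one iterates. Theorem~\ref{Thm:main} then follows in two lines: a minimal crossing diagram cannot have a proper subdiagram representing the same link, so it must already equal its minimal genus subdiagram. If you want to salvage your approach, you would need to either construct your $p(L)$ explicitly with proofs of both bounds (which seems substantially harder than the projection argument) or restructure around the subdiagram statement, which is where the parity actually does its work.
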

That is, a diagram that realises the classical crossing number also realises the supporting genus. This verifies \cite[Conjecture 5.1]{Boden-Karimi-2019} and yields the following result for classical links.

\begin{corollary}\label{Cor:classical}
	The crossing number of a classical link does not decrease when it is considered as a virtual link.
\end{corollary}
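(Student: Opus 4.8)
The plan is to deduce the statement from \Cref{Thm:main} together with the elementary observation that a classical link, regarded as a virtual link, has supporting genus zero. Write $c(L)$ for the crossing number of $L$ as a classical link and $c_v(L)$ for its classical crossing number once $L$ is regarded as a virtual link. Since every classical diagram is in particular a virtual diagram with no virtual crossings, one has $c_v(L) \leq c(L)$ for free; the whole content is the reverse inequality $c(L) \leq c_v(L)$, i.e.\ that passing to the virtual category cannot lower the crossing number.

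First I would choose a virtual diagram $D$ of $L$ realising the classical crossing number, so that $D$ has exactly $c_v(L)$ classical crossings. By \Cref{Thm:main}, $D$ is a minimal genus diagram, meaning its Carter surface $\Sigma$ has genus equal to the supporting genus of $L$. Now a planar diagram of the classical link $L$ has a spherical Carter surface, so the supporting genus of $L$ is zero; hence $\Sigma \cong S^2$. The point of this step is that minimality of the crossing number is being converted, via the main theorem, into minimality of the genus, which for a classical link pins the Carter surface down to a sphere.

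Next I would convert $D$ into a classical diagram without increasing the classical crossing count. Because $\Sigma$ is a sphere, the underlying immersed curve of $D$ can be realised on $S^2$ with its only self-intersections the classical crossings of $D$: on a genus-zero Carter surface the virtual crossings are inessential and may be removed by detour moves. Deleting a point of $S^2$ off the diagram then yields a planar classical diagram $D'$ whose crossings are precisely the classical crossings of $D$, so $D'$ has $c_v(L)$ crossings and represents the same virtual link as $D$, namely $L$.

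The final, and most delicate, point is to conclude that $D'$ represents $L$ as a \emph{classical} link. Being planar, $D'$ represents some classical link $L'$ with $L' = L$ as virtual links; by the injectivity of the map from classical links to virtual links (Goussarov--Polyak--Viro, Kuperberg), $L' = L$ as classical links as well. Therefore $c(L) \leq c_v(L)$, which with the trivial inequality gives equality and in particular the claimed non-decrease. The hard part of the argument is entirely contained in \Cref{Thm:main}; for the corollary itself the only real subtlety is this last passage from a spherical Carter surface to an honest classical diagram of the \emph{same} link, which rests on the well-definedness of the genus-zero part of the virtual-to-classical correspondence.
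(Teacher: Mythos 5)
Your proposal is correct and is essentially the argument the paper intends: the paper offers no written proof of the corollary, stating only that \Cref{Thm:main} ``yields'' it, and your deduction (minimal crossing diagram is minimal genus, a classical link has supporting genus zero, so the Carter surface is a sphere and the diagram is classical, with Kuperberg/Goussarov--Polyak--Viro injectivity identifying the resulting classical link) is the standard and intended route. You correctly isolate the one genuine subtlety, namely the final appeal to the injectivity of the classical-to-virtual map.
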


\Cref{Thm:main} is proved by introducing a new parity theory for virtual links, before employing a parity projection argument. Parity is a powerful concept in virtual knot theory, and extending it to virtual links is an important task (see \cite[Section 1]{Rushworth2019}). The parity theory for links that we introduce is applicable to a restricted class of virtual link diagrams, and it is the natural extension of the so-called \emph{homological parity} for virtual knots due to Manturov \cite{Manturov2010,Manturov13}. In addition to a combinatorial definition, we present a topological definition of our parity theory in \Cref{Sec:top}.

\Cref{Thm:main} is a consequence of \Cref{Thm:sub}. The latter result guarantees that given a diagram of a virtual link \( L \), one may convert classical crossings to virtual crossings to produce a minimal genus diagram of \( L \). As examples of its utility, we apply \Cref{Thm:sub} to show that the bridge and ascending numbers of a virtual link are realised on minimal genus diagrams (see \Cref{Prop:bridge,Prop:asc}). The corresponding result regarding the bridge number of a virtual knot is due to Chernov \cite{Chernov2013} and Manturov \cite{Manturov13}.

The parity constructed in this note appears to be well-suited to answering questions of the form: given a quantity extracted from a virtual link diagram, can we minimise it on a minimal genus representative? \Cref{Thm:main}, \Cref{Prop:bridge}, and \Cref{Prop:asc} are instances of this question in the case of crossing, bridge, and ascending number, and it is interesting to consider other instances. In particular, we wish to advertise the following open question: can the unknotting number of a virtual knot be realised on a minimal genus diagram?

We construct the requisite parity theory in \Cref{Sec:parity}, before proving \Cref{Thm:main} and related results in \Cref{Sec:apps}.

\subsubsection*{Conventions}
All surfaces are closed and orientable, and are denoted by \( \Sigma \). We denote by \( RI \), \(RII\), and \(RIII\) the classical Reidemeister moves. For our purposes a \emph{link in a thickened surface} is a smooth embedding \( \bigsqcup S^1 \hookrightarrow \Sigma \times I \), considered up to isotopy, where \( I = [0,1] \). A \emph{diagram} of a link in a thickened surface is a link diagram drawn on a surface. Two diagrams of a given link in a thickened surface are related by a finite sequence of the moves \( RI \), \(RII\), \(RIII\), and isotopy, where the Reidemeister moves occur in disc neighbourhoods of \( \Sigma \). We denote diagrams of links in thickened surfaces by the \verb|\mathfrak| character \( \mathfrak{D} \), reserving Roman characters for virtual links and their diagrams.

\subsubsection*{Acknowledgements} We thank Homayun Karimi, Andrew Nicas, and a referee for their helpful comments on an earlier version of this work. We are indebted to Adam Sikora for a number of comments that significantly improved this work. We also thank Zhiyun Cheng, Micah Chrisman, Vassily Manturov, and Puttipong Pongtanapaisan for their valuable feedback.

\section{A homological parity for links}\label{Sec:parity}
In this section we introduce a new theory of parity on virtual links (well-defined for a certain sequences of generalised Reidemeister moves). This parity may be thought of an extension of the homological parity for knots \cite{Manturov2010,Manturov13}, the definition of which is non-local: the full knot diagram is used to determine the parity of a crossing. As a consequence, the construction does not extend to links. The parity that we define in this section is local in nature -- it is computed directly at a crossing -- so that it may naturally be applied to links with arbitrarily many components.

We begin with the combinatorial definition of our parity theory in \Cref{Sec:comb}, before presenting an equivalent topological definition in \Cref{Sec:top}.

\subsection{Combinatorial definition}\label{Sec:comb}
Our construction proceeds as follows. Working at the level of link diagrams on surfaces we introduce a function, \( f_{\mathscr{C}} \), on the crossings of such diagrams. We show that \( f_{\mathscr{C}} \) satisfies the appropriate version of the parity axioms for link diagrams on surfaces, and thus descends to a \emph{bona fide} parity on virtual link diagrams, well-defined for sequences of generalised Reidemeister moves defined by isotopies of links in thickened surfaces.

\begin{figure}
	\includegraphics[scale=0.75]{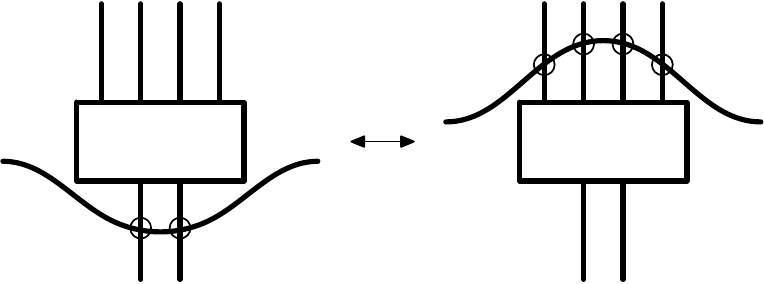}
	\caption{An example of the detour move on virtual link diagrams: a segment containing only virtual crossings may be removed and replaced arbitrarily (with any new crossings produced being virtual).}
	\label{Fig:detour}
\end{figure}

Given an isotopy of links in thickened surfaces there is no canonical way to define a sequence of generalised Reidemeister moves: there is a choice of how some isotopies are realised as detour moves (as in \Cref{Fig:detour}). However, this choice does not affect the resulting parity, nor our proof of \Cref{Thm:main} (and related results). If the reader wishes, they can remove this choice by working \emph{mutatis mutandis} with sequences of Gauss diagrams (in place of virtual link diagrams), as detour moves do not appear in such sequences \cite{GPV}.

First, we state the axioms of a parity for link diagrams on surfaces. These axioms correspond directly to those given by Ilyutko, Manturov, and Nikonov for virtual knots and other knotted objects \cite{Ilyutko2014,Nikonov-2016}.
\begin{definition}
	\label{Def:parityaxioms}
	Consider the category whose objects are link diagrams on surfaces, and morphisms are sequences of Reidemeister moves (where such moves take place on disc neighbourhoods). Given an assignment of a function \( f(\mathfrak{D}) \) to every object \( \mathfrak{D} \), with domain the set of crossings of \( \mathfrak{D} \) and codomain \( \Z_2 \), we refer to the image of a crossing under \( f(\mathfrak{D}) \) as \emph{the parity} of the crossing; crossings that are mapped to \( 0 \) are \emph{even}, and those mapped to \( 1 \) are \emph{odd}.
	Such an assignment of functions is \emph{a parity} if it satisfies the following axioms:
	\begin{enumerate}[start=0]
		\item If diagrams \( \mathfrak{D} \) and \( \mathfrak{D}' \) are related by a single Reidemeister move, then the parities of the crossings that are not involved in this move do not change.
		\item If \( \mathfrak{D}  \) and \( \mathfrak{D} ' \) are related by a Reidemeister I move that eliminates a crossing, then the parity of that crossing is even.
		\item If \( \mathfrak{D}  \) and \( \mathfrak{D} ' \) are related by a Reidemeister II move eliminating the crossings \( c_1 \) and \( c_2 \), then \( c_1 \) and \( c_2 \) are both even or both odd.
		\item If \( \mathfrak{D}  \) and \( \mathfrak{D} ' \) are related by a Reidemeister III move then the parities of the three crossings involved in the move are unchanged. Further, these three crossings are all even, all odd, or exactly two are odd.
		\qedhere
	\end{enumerate}
\end{definition}

An extremely useful application of a parity theory is \emph{parity projection}. Consider a sequence of virtual link diagrams
\begin{equation*}
D_1 \rightarrow D_2 \rightarrow \cdots \rightarrow D_n
\end{equation*}
related by generalised Reidemeister moves. Given a parity, define \( p(D_i) \) to be the diagram obtained from \(D_i \) by replacing every odd crossing with a virtual crossing (that is, \( \raisebox{-4pt}{\includegraphics[scale=0.35]{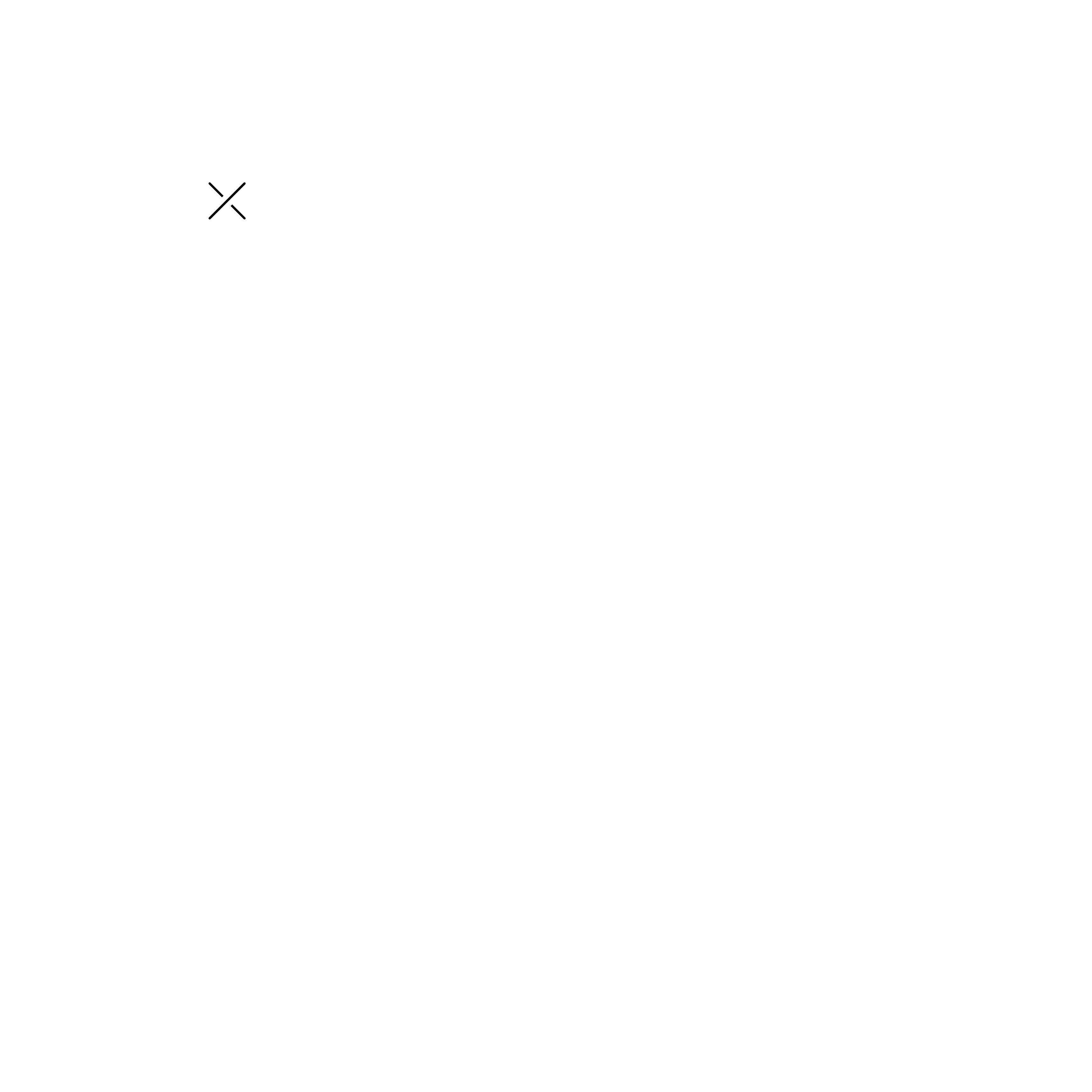}} \to \raisebox{-4pt}{\includegraphics[scale=0.35]{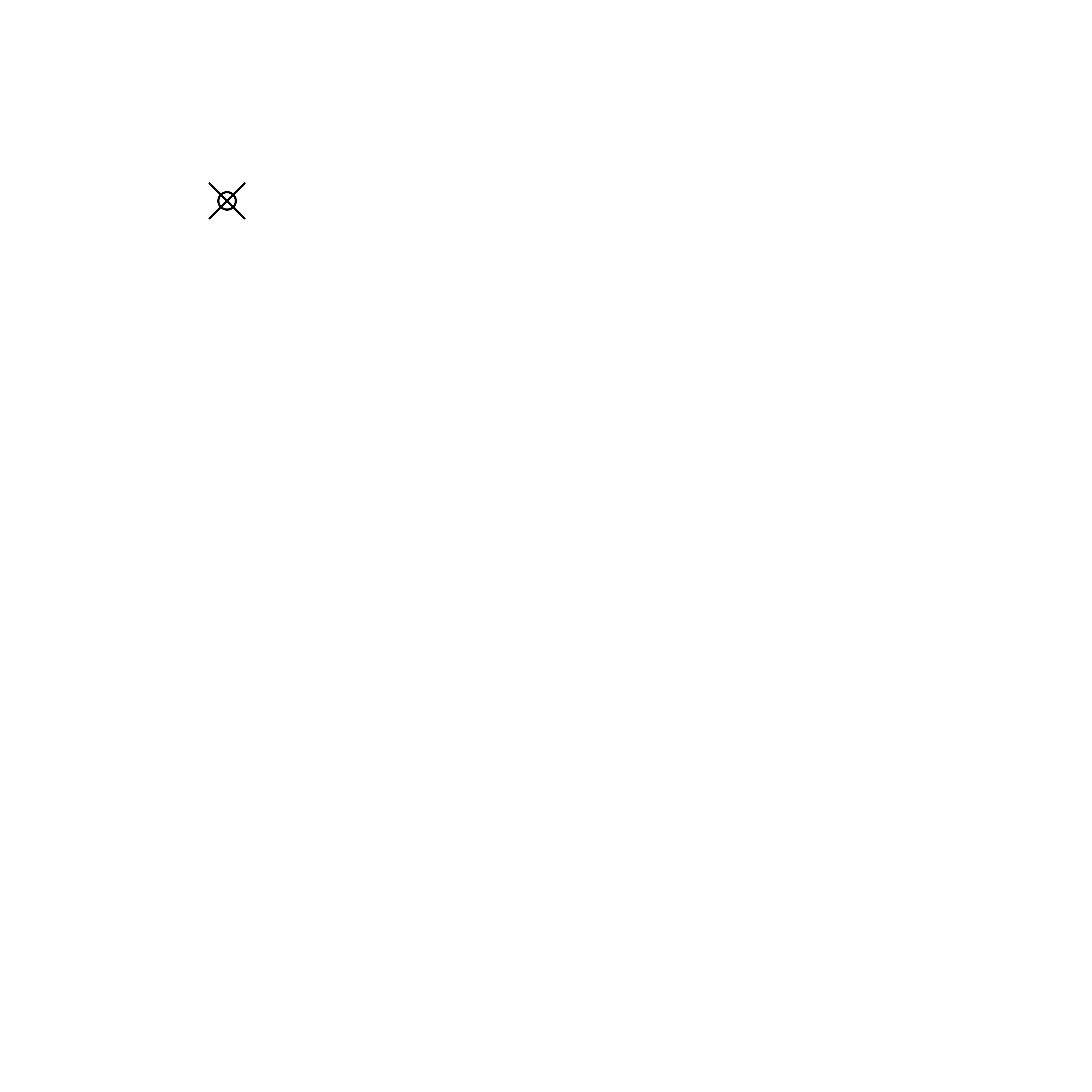}} \)). Virtual links may be defined in terms of Gauss diagrams; on the Gauss diagram of \( D_i \) parity projection corresponds to simply deleting the chords associated to odd crossings. We obtain the new sequence
\begin{equation*}
p(D_1) \rightarrow p(D_2) \rightarrow \cdots \rightarrow p(D_n).
\end{equation*}
That \( p(D_i) \) is related to \( p(D_{i+1}) \) by a generalised Reidemeister move is a consequence of the parity axioms. This operation is known as parity projection; for full details see \cite[Section \(1.3\)]{Manturov13}. We make use of a parity projection argument in the proof of \Cref{Thm:main}.

We use simple closed curves to define colourings of link diagrams on a surface, and use such colourings to define a parity theory.
\begin{definition}[\( \gamma \)-colouring]\label{Def:gcolour}
	Let \( \mathfrak{D} \) be a link diagram on \( \Sigma \), and \( \gamma \subset \Sigma \) a simple closed curve that intersects \( \mathfrak{D} \) transversely and away from crossings, such that every component of \( \mathfrak{D} \) has even intersection number with \( \gamma \). A \emph{\( \gamma \)-colouring of \(\mathfrak{D}\)} is a colouring of the components of \( \mathfrak{D} \) exactly one of two colours, such that the colour switches when passing through \( \gamma \). The colour of a component does not change at a crossing.
\end{definition}
An example of a \( \gamma \)-colouring is given in \Cref{Ex:gcolour}; our convention is to depict the curves \( \gamma \) in red, and use the colours blue and green for the components of link diagrams. Notice that \Cref{Def:gcolour} is well-posed due to the intersection condition. For a fixed curve \( \gamma \), a diagram of a link of \( m \) components possesses either \(0\) or \( 2^m \) \( \gamma \)-colourings; the diagram possesses \(0\) \(\gamma \)-colourings if and only if there is a component that intersects \( \gamma \) an odd number times.
\begin{definition}\label{Def:gparity1}
	Suppose that \( \mathfrak{D} \) and \(\gamma \) are as in \Cref{Def:gcolour}. Let \( \mathscr{C} \) be a \(\gamma \)-colouring of \( \mathfrak{D} \). Define a function on the crossings of \( \mathfrak{D} \), denoted \( f_{\mathscr{C}} \), as follows,
	\begin{equation}\label{Eq:gparity}
		\begin{aligned}
			&f_{\mathscr{C}} \left(\, \raisebox{-11pt}{\includegraphics[scale=0.5]{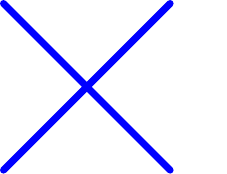}} \! \!\!  \right) = f_{\mathscr{C}} \left(\, \raisebox{-11pt}{\includegraphics[scale=0.5]{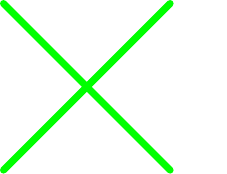}}\! \!\! \right) = 0 \quad \text{and}  
			&f_{\mathscr{C}} \left(\, \raisebox{-11pt}{\includegraphics[scale=0.5]{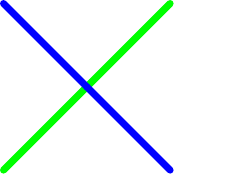}}\! \!\! \right) = 1.
		\end{aligned}
	\end{equation}
\end{definition}

\begin{example}\label{Ex:gcolour}
	A link diagram on a torus, and a \( \gamma \)-colouring of it. With respect to the given \( \gamma \)-colouring two crossings are even, and two are odd.
	\begin{equation*}
	\begin{matrix}
	\includegraphics[scale=0.5]{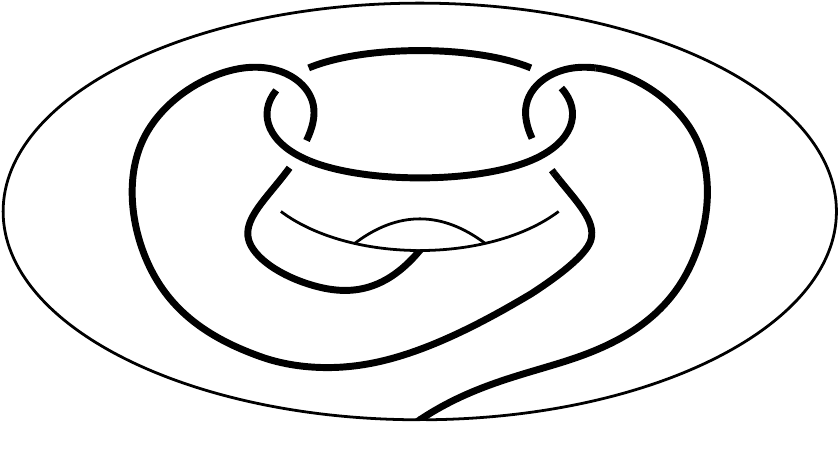} &\qquad &\includegraphics[scale=0.5]{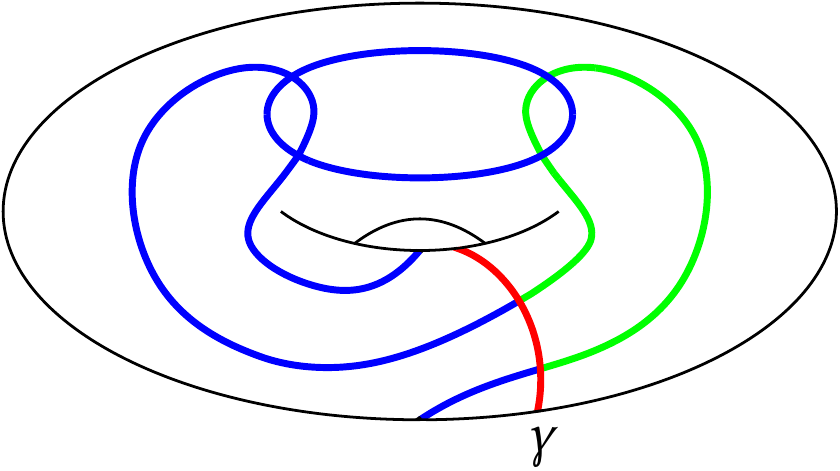} \\
	\end{matrix}
	\end{equation*}
\end{example}

Let \( \mathfrak{D} \), \( \mathfrak{D}' \) be diagrams on \( \Sigma \) related by a single Reidemeister move, and \( \gamma \subset \Sigma \) a simple closed curve. Suppose that \(\mathfrak{D} \) possesses a \(\gamma \)-colouring \( \mathscr{C} \); it is clear that \( \mathscr{C} \) induces a \(\gamma \)-colouring of \( \mathfrak{D}' \).

\begin{proposition}\label{Prop:gparity1}
	Let \( \mathfrak{D} \), \( \gamma \), and \( \mathscr{C} \) be as in \Cref{Def:gparity1}. The function \( f_{\mathscr{C}} \) is a parity.
\end{proposition}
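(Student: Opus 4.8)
The plan is to check the four axioms of \Cref{Def:parityaxioms} directly, exploiting the fact that \( f_{\mathscr{C}} \) is \emph{local}: by \Cref{Eq:gparity}, a crossing is even precisely when its two strands carry the same colour under \( \mathscr{C} \), and odd when they carry different colours. Throughout I regard the two colours as elements of \( \Z_2 \), so that the parity of a crossing between strands coloured \( a \) and \( b \) is exactly \( a + b \in \Z_2 \). Let \( \mathfrak{D} \to \mathfrak{D}' \) be a single Reidemeister move supported in a disc \( B \subset \Sigma \), and let \( \mathscr{C}' \) be the induced \( \gamma \)-colouring of \( \mathfrak{D}' \).

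The one technical point, which I would isolate as a preliminary step, is that we may assume \( \gamma \cap B = \emptyset \). The colour difference of the two strands at a crossing \( c \) equals the mod-\(2\) intersection number with \( \gamma \) of an arc of \( \mathfrak{D} \) joining them; this is invariant under any isotopy of \( \gamma \) that keeps \( \gamma \) transverse to \( \mathfrak{D} \) and disjoint from its crossings. Hence such an isotopy leaves \( f_{\mathscr{C}} \) unchanged on every crossing, and we may push \( \gamma \) off the (finite) support disc without altering any parity. With \( \gamma \cap B = \emptyset \), every arc of \( \mathfrak{D} \) or \( \mathfrak{D}' \) lying in \( B \) has a single well-defined colour, which is exactly what the local pictures require.

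Granting this, the first three verifications are short. For Axiom 0, outside \( B \) the diagrams and their colourings agree, so any crossing uninvolved in the move retains both strand-colours, hence its parity. For Axiom 1, the two arcs meeting at the \( RI \) crossing are joined by the kink, an arc lying entirely in \( B \); since \( \gamma \cap B = \emptyset \) they share a colour, so the crossing is even. For Axiom 2, the two \( RII \) crossings \( c_1, c_2 \) lie on the same pair of strands \( s, t \), which are connected within \( B \) and therefore monochromatic there; both crossings thus record the colour pair \( (\mathscr{C}(s), \mathscr{C}(t)) \) and are simultaneously even or odd.

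The one genuinely structural check is Axiom 3. The three strands of an \( RIII \) move carry colours \( a, b, c \in \Z_2 \), constant across \( B \), and the move neither creates nor destroys crossings: the three crossings persist, each between a fixed pair of strands, so their parities \( a+b \), \( b+c \), \( a+c \) are unchanged, giving the first assertion. For the second, their sum is \( (a+b)+(b+c)+(a+c) = 2(a+b+c) = 0 \in \Z_2 \), so the number of odd crossings is even, that is, \( 0 \) or \( 2 \); equivalently, with only two colours the values \( a,b,c \) cannot be pairwise distinct, so exactly one or exactly three odd crossings can never occur. The surviving possibilities, all even or exactly two odd, are both permitted by the axiom. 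I expect the main obstacle to be precisely the boundary interaction between \( \gamma \) and the support disc \( B \): establishing that a crossing's parity is an honest mod-\(2\) intersection number, invariant under isotoping \( \gamma \) off \( B \), is what legitimises all four local pictures, after which Axioms 0--2 are immediate and the only delicate point in Axiom 3 is ruled out automatically by two-colourability.
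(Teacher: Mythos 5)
Your overall strategy -- make the check purely local by first arranging \( \gamma \) to be disjoint from the support disc \( B \) -- differs from the paper, which verifies the axioms directly with \( \gamma \) allowed to enter \( B \). The local case analysis you give after the reduction (and in particular the \( \Z_2 \)-sum argument for \( RIII \)) is fine. The problem is the reduction itself: the isotopy you invoke need not exist. You only establish invariance of \( f_{\mathscr{C}} \) under isotopies of \( \gamma \) that stay \emph{disjoint from the crossings}, but an arc of \( \gamma \cap B \) can separate two crossings of \( B \) from one another, and then any isotopy pushing that arc out of \( B \) must sweep across one of them. This is not a pathological configuration: \( \gamma \) running through the bigon of an \( RII \) move, separating its two crossings, is exactly the configuration that drives the proof of the main theorem (it is what makes those two crossings odd), so your preliminary step fails precisely where the parity is doing real work. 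There is also a smaller slip in the justification: for a crossing between two \emph{different} link components there is no ``arc of \( \mathfrak{D} \) joining'' the two strands, so the parity there is not a mod-\(2\) intersection number of a single arc with \( \gamma \); it depends on the choice of \( \mathscr{C} \) among the \( 2^m \) colourings, not on \( \gamma \) alone.

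Both issues are repairable. For the first, one can check that sweeping \( \gamma \) across a crossing flips the colours of \emph{both} strands at that crossing (each of the two local edges cut by \( \gamma \) before the sweep is exchanged for the opposite edge afterwards), so the parity of that crossing -- being the sum of the two colours -- is unchanged; with that extra lemma your reduction goes through for arbitrary isotopies of \( \gamma \). For the second, argue componentwise with basepoints. Alternatively, drop the reduction entirely and argue as the paper does: since the move is supported in a disc, every arc of \( \gamma \cap B \) has both endpoints on \( \partial B \) and therefore meets the kink of an \( RI \) move, each bigon of an \( RII \) move, and each triangle edge-pair of an \( RIII \) move an even number of times in total; this evenness is exactly what forces the \( RI \) crossing to be even, the two \( RII \) crossings to have equal parity, and the \( RIII \) parities to be preserved with an even number of odd crossings, with no hypothesis that \( \gamma \) avoids \( B \).
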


\begin{proof}
	That \( f_{\mathscr{C}} \) satisfies the axioms of \Cref{Def:parityaxioms} may be verified by directly comparing the Reidemeister moves to \Cref{Eq:gparity}, recalling that these moves are supported on disc neighbourhoods of \( \Sigma \). We suffice ourselves with some example verifications. The following hold for any \( \gamma \)-colouring (a possible position of the curve \( \gamma \) is denoted by the red arc):
	\begin{equation*}
			\begin{matrix}
			\includegraphics[scale=0.8]{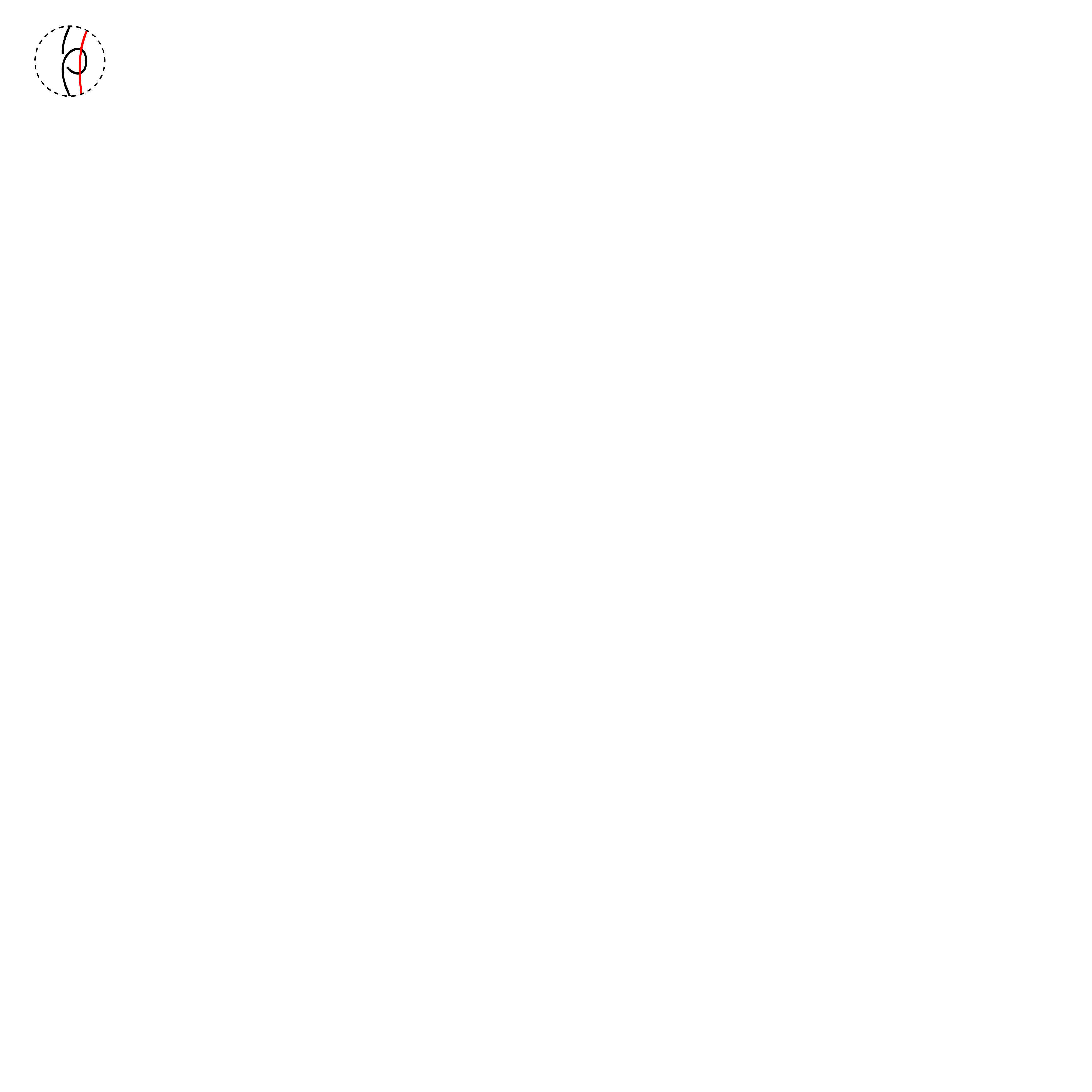} & \quad & \includegraphics[scale=0.8]{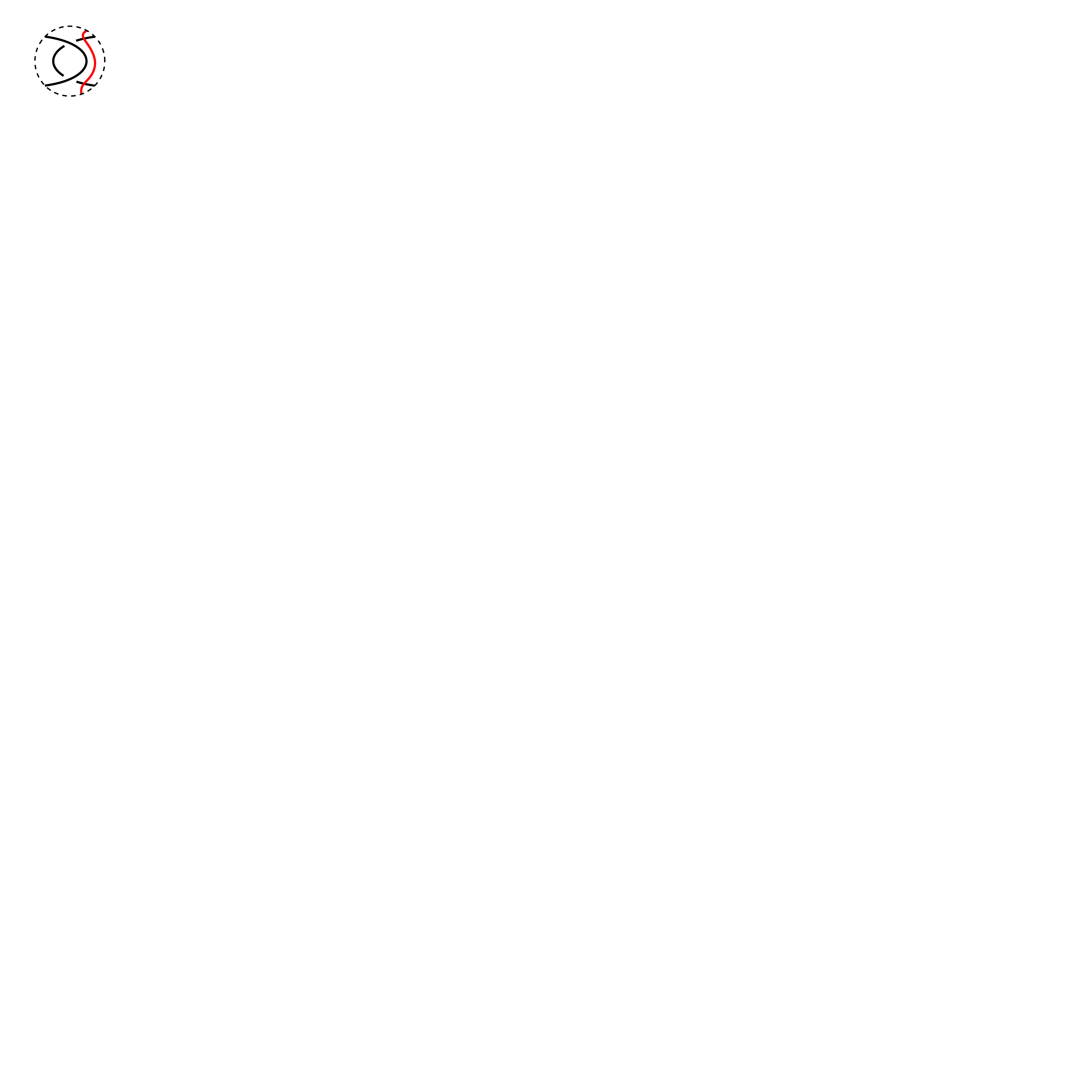} & \quad & \includegraphics[scale=0.8]{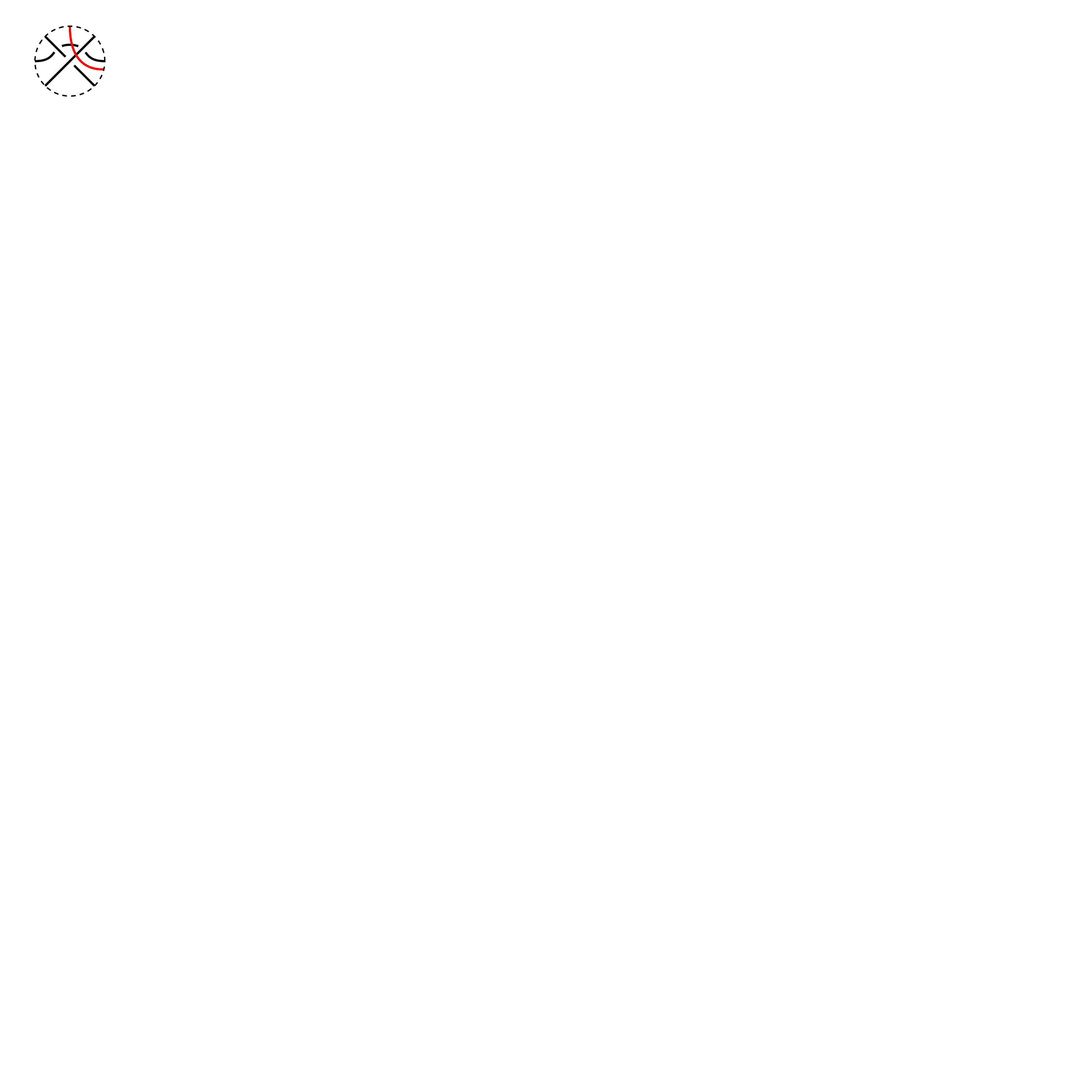} \\
			0~\text{odd crossings} & & 0 ~\text{or}~2~ \text{odd crossings} & & 0 ~\text{or}~2~ \text{odd crossings}
			\end{matrix}
	\end{equation*} 
\end{proof}

Given a sequence of diagrams \( \mathfrak{D}_i \) on \( \Sigma \) such that \( \mathfrak{D}_i \) is related to \( \mathfrak{D}_{i+1} \) by a Reidemeister move, there is a naturally associated sequence of virtual link diagrams \( D_i \), related by generalised Reidemeister moves. Note that the sequence of virtual link diagrams may be longer than that of diagrams on \( \Sigma \): the former sequence may include detour moves, that are not present in the latter. We use \Cref{Prop:gparity1} to define a parity for such (sequences of) virtual link diagrams.

\begin{proposition}\label{Prop:gparity2}
	Let \( \lbrace \mathfrak{D}_i \rbrace \), \( 1 \leq i \leq n \), be a sequence of diagrams on \( \Sigma \) such that \( \mathfrak{D}_i \) is related to \( \mathfrak{D}_{i+1} \) by a Reidemeister move, and \(\lbrace D_j \rbrace \), \( 1 \leq j \leq m \geq n \), the associated sequence of virtual link diagrams. Suppose that \( \mathfrak{D}_i \) possesses a \( \gamma \)-colouring, \( \mathscr{C} \), for some simple closed curve \( \gamma \); by abuse of notation also denote by \( \mathscr{C} \) the induced \( \gamma \)-colouring on \( \mathfrak{D}_j \), \( i \neq j \). Then \( f_{\mathscr{C}} \) descends to a parity on the virtual link diagrams \( D_j \).
\end{proposition}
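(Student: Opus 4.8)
The plan is to transport the combinatorial parity $f_{\mathscr{C}}$ from the surface-diagram setting, where \Cref{Prop:gparity1} already guarantees the parity axioms, to the setting of the associated virtual link diagrams, and to check that the one genuinely new phenomenon in the virtual sequence---the detour move---does not interfere. First I would make precise the correspondence between a diagram $\mathfrak{D}_i$ on $\Sigma$ and its associated virtual link diagram $D_i$: the classical crossings of $\mathfrak{D}_i$ are in bijection with the classical crossings of $D_i$, and the $\gamma$-colouring $\mathscr{C}$ of $\mathfrak{D}_i$, together with the position of $\gamma$ relative to each classical crossing, determines a value of $f_{\mathscr{C}}$ at each such crossing of $D_i$. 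Because \Cref{Def:gparity1} is \emph{local}---it reads off the parity purely from how $\gamma$ meets the four arcs at the crossing---this assignment descends unambiguously to the crossings of $D_i$, and I would record that the virtual crossings of $D_i$ simply carry no parity label.

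Next I would verify that $f_{\mathscr{C}}$ satisfies the parity axioms of \Cref{Def:parityaxioms} along the sequence $\{D_j\}$. For any step of the virtual sequence that is one of the classical moves $RI$, $RII$, $RIII$ supported on a disc, the local picture at the crossings is identical to the corresponding surface move, so the required behaviour (Axioms 0--3) is inherited verbatim from \Cref{Prop:gparity1}; here I would emphasise that the colouring induced on $\mathfrak{D}_j$ restricts to the disc exactly as it does for $\mathfrak{D}_i$, since $\mathscr{C}$ switches colour only across $\gamma$ and the crossings of the move are confined to a disc away from (or crossing $\gamma$ in the same pattern as) the surface move. The key extra case is the detour move of \Cref{Fig:detour}: a detour move replaces a strand carrying only virtual crossings, creates and destroys only virtual crossings, and leaves the classical crossings---hence their $f_{\mathscr{C}}$-values---completely untouched. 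Thus Axiom 0 holds trivially across detour moves, and Axioms 1--3 are vacuous for them since they never eliminate or alter a classical crossing.

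The main subtlety, and where I would spend the most care, is confirming that $f_{\mathscr{C}}$ is genuinely \emph{well-defined on the virtual diagrams}, independent of the ambiguity flagged in the paragraph before \Cref{Def:parityaxioms}: an isotopy of the link in $\Sigma \times I$ may be realised by different choices of detour moves, giving a priori different virtual sequences $\{D_j\}$. Since every such choice differs only in the virtual-crossing data, and $f_{\mathscr{C}}$ is computed solely from the (unchanged) classical crossings and the fixed curve $\gamma$, all realisations assign the same parities; I would state this explicitly as the justification for the phrase ``by abuse of notation'' in the hypothesis, noting that the induced colouring $\mathscr{C}$ on each $\mathfrak{D}_j$ is determined by $\gamma$ alone. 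I expect the routine checking of Axioms 1--3 against \Cref{Eq:gparity} to be the easy part, inherited directly from \Cref{Prop:gparity1}; the only real obstacle is articulating cleanly why passing from surface diagrams to virtual diagrams (with the attendant detour moves and their non-uniqueness) preserves both well-definedness and the axioms, and the locality of \Cref{Def:gparity1} is precisely the property that makes this transition automatic.
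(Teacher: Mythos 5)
Your proposal is correct and follows essentially the same route as the paper: detour moves preserve the classical crossings and the underlying surface diagram, so the parity is unchanged and the axioms are vacuous there, while the remaining generalised Reidemeister moves inherit the axioms directly from \Cref{Prop:gparity1}. The extra care you take over the non-uniqueness of detour realisations is a reasonable elaboration of the paper's remark preceding \Cref{Def:parityaxioms}, but it does not change the argument.
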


\begin{proof}
	Suppose that the move \( D_{j} \rightarrow D_{j+1} \) is a detour move. Then there is a one-to-one correspondence between the classical crossings of \( D_{j} \) and \( D_{j+1} \), and both diagrams are associated to the same diagram on \( \Sigma \). It follows that the parity of a crossing is unchanged across the move \( D_{j} \rightarrow D_{j+1} \).
	
	Now suppose that the move \( D_{j} \rightarrow D_{j+1} \) is a generalised Reidemeister move induced by a Reidemeister move \( \mathfrak{D}_{i} \rightarrow \mathfrak{D}_{i+1} \) on \( \Sigma \). \Cref{Prop:gparity1} guarantees that \( f_{\mathscr{C}} \) satisfies the axioms of \Cref{Def:parityaxioms} for the move \( \mathfrak{D}_{i} \rightarrow \mathfrak{D}_{i+1} \).
	
	Consider the function induced by \( f_{\mathscr{C}} \) on the virtual link diagrams \( D_j \); it follows immediately from the observations above that this function satisfies the parity axioms for a move \( D_{j} \rightarrow D_{j+1} \).
\end{proof}

Henceforth we shall use \( f_{\mathscr{C}} \) to denote both the parity for link diagrams on \( \Sigma \), and the induced parity for associated virtual link diagrams. We refer to both as the \emph{\( \mathscr{C} \)-parity}.

\subsection{Topological definition}\label{Sec:top}
The \( \mathscr{C} \)-parity enjoys a topological definition in terms of covering spaces that we now describe (for a similar construction in the case of the Gaussian parity for virtual knots, see \cite[Section 5]{Boden2017}). Given a link diagram \( \mathfrak{D} \) on \( \Sigma,\) suppose that \( \gamma \subset \Sigma \) is a simple closed curve with even intersection number with every component of \( \mathfrak{D} \). Let \( \pi : \widetilde{\Sigma}  \times I \to \Sigma \times I \) be a double cover of \( \Sigma \times I \) formed by cutting two copies of \( \Sigma \times I \) along \( \gamma \times I \), and identifying the resulting boundaries (see \Cref{Fig:cover}). The diagram \( \mathfrak{D} \) represents a link, \( \mathfrak{L} \), in \( \Sigma \times I \), and \( {\pi}^{-1} \left( \mathfrak{L} \right) \) is a link with twice as many components as \( \mathfrak{L} \) (this is a consequence of the intersection condition on \( \mathfrak{D} \) and \( \gamma \)).

A \( \gamma \)-colouring of \( \mathfrak{D} \) is equivalent to a choice of lifting of \(\mathfrak{L}\) to a link \( \widetilde{\mathfrak{L}} \) in \(\widetilde{\Sigma} \times I\): the colouring may be used to indicate a choice of preferred component of \( {\pi}^{-1} \left( \mathfrak{L} \right) \) for each component of  \( \mathfrak{L} \) (an example is given in \Cref{Fig:cover}). Specifically, segments of distinct colours are lifted to distinct sheets of the cover \(\pi: \widetilde{\Sigma}\times I \to \Sigma \times I\). A subtlety is presented by the fact that \( {\pi}^{-1} ( \gamma \times I ) \) consists of two connected components. Given a \( \gamma \)-colouring of \( \mathfrak{D} \), the specific intersection between the components of \( \widetilde{\mathfrak{L}} \) and \( {\pi}^{-1} ( \gamma \times I ) \) may be determined by choosing orientations for \(\mathfrak D\) and \( \gamma \), and comparing the sign of the intersection point \(\mathfrak{D} \cap \gamma\) with the colours of \(\mathfrak D\) as it crosses \(\gamma\). The lift does not depend of the orientations for \(\mathfrak D\) and \( \gamma \).

Further, it is clear that such a choice of \( \widetilde{\mathfrak{L}} \) defines a \( \gamma \)-colouring of \( \mathfrak{D} \), via the correspondence between sheets of the cover and colours of the segments. It follows that the parity constructed in \Cref{Sec:comb} may alternatively be defined in terms of a topological choice of lift, as opposed to a combinatorial choice of colouring.

Recall that given a parity theory, parity projection is the removal of odd crossings by converting them to virtual crossings. In the topological setting described here the link \( \widetilde{\mathfrak{L}} \) is precisely the link obtained from \( \mathfrak{L} \) by (the appropriate notion of) parity projection. Specifically, let \(D\) be a virtual link diagram defined by \( \mathfrak{D} \), and \( \mathscr{C} \) the \( \gamma \)-colouring associated to the choice of lift \(  \widetilde{\mathfrak{L}} \). If \( p(D) \) is the diagram obtained from \( D \) by parity projection with respect to \( f_{\mathscr{C}} \), then \( p(D) \) is a diagram of the virtual link represented by \( \widetilde{\mathfrak{L}} \).

As such, it follows that parity projection is realised as the operation of lifting \( \mathfrak{L} \) to a prescribed double cover of \( \Sigma \times I\).

\begin{figure}
	\centering
	\includegraphics[scale=0.5]{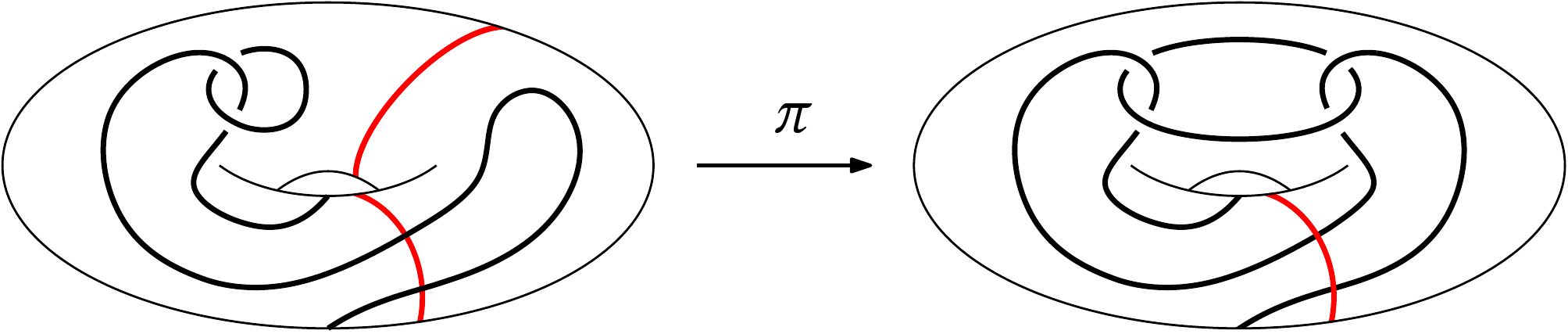}
	\caption{The \( \gamma \)-colouring of \Cref{Ex:gcolour} is equivalent the depicted choice of lift.}\label{Fig:cover}
\end{figure}

\section{Applications of homological parity for links}\label{Sec:apps}
With the \( \mathscr{C} \)-parity in place, we use it to obtain our main result in \Cref{Sec:thm}, before presenting further applications in \Cref{Sec:lem}.

\subsection{Proof of \Cref{Thm:main}}\label{Sec:thm}
We begin by recalling some necessary definitions. 
\begin{definition}[Carter surface \cite{Carter91,Kamada2000}]\label{Def:cs}
	Let \( D \) be a virtual link diagram. Consider \( D \) as an abstract \(4\)-valent graph (in which classical crossings are vertices, virtual crossings are not); there is an orientable surface with boundary, \( F \), that deformation retracts onto this graph. The \emph{Carter surface} of \( D \) is the closed orientable surface obtained by gluing discs to the boundary of \( F \).
\end{definition}
For a virtual link diagram, \( D \), the construction of the Carter surface of \( D \) naturally produces a link diagram \( \mathfrak{D} \) on the Carter surface, such that \( D \) corresponds to \( \mathfrak{D} \). This construction is unaffected by detour moves; the Carter surface associated to a virtual link diagram depends only on its underlying Gauss diagram.

A \emph{subdiagram} of a virtual link diagram \( D \) is a diagram obtained by converting a (possibly empty) subset of the classical crossings of \(D\) to virtual crossings. A \emph{proper subdiagram} is formed by converting a non-empty subset of classical crossings.

\Cref{Thm:main} is a consequence of the following result that enjoys wider utility: applications to ascending number and bridge number are given in \Cref{Sec:lem}.

\begin{theorem}\label{Thm:sub}
	A diagram of a virtual link \(L\) possesses a subdiagram that also represents \(L\) and has minimal genus.
\end{theorem}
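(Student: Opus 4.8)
The plan is to prove \Cref{Thm:sub} by a parity-projection argument, isolating a single genus-lowering step and then iterating it. The reduction is as follows: it suffices to prove the \textbf{core claim} that if a diagram \( F \) of \( L \) is \emph{not} of minimal genus, then \( F \) admits a \emph{proper} subdiagram that still represents \( L \) and whose Carter surface has strictly smaller genus. Granting this, I would choose, among all subdiagrams of \( D \) that represent \( L \), one whose Carter surface has least genus (the set is nonempty since \( D \) represents itself). A subdiagram of a subdiagram of \( D \) is again a subdiagram of \( D \), so the core claim forbids the genus of this extremal choice from exceeding the supporting genus; hence it is a minimal genus subdiagram of \( D \), which is \Cref{Thm:sub}.

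To set up the core step I would fix a minimal genus diagram \( E \) of \( L \), with Carter surface \( \Sigma_{\min} \), and compare the thickened-surface representatives \( \mathfrak{F}\subset\Sigma\times I \) and \( \mathfrak{E}\subset\Sigma_{\min}\times I \) determined by \( F \) and \( E \). Since \( F \) and \( E \) represent the same virtual link they are stably equivalent, so after passing to a common stabilisation I may assume both diagrams lie on a single surface \( \Sigma \) and are joined by a sequence of Reidemeister moves on \( \Sigma \) (together with the detour moves of the associated virtual sequence). Because \( F \) is not minimal genus, \( \Sigma \) has genus strictly larger than \( g_{\min} \), and I would locate a simple closed curve \( \gamma\subset\Sigma \) that is the core of one of the ``extra'' handles relative to \( \Sigma_{\min} \); concretely, \( \gamma \) is disjoint from \( \mathfrak{E} \) but essential. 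The point of comparing with \( E \) is twofold. First, the homology class of the link in \( H_1(\Sigma) \) is unchanged by Reidemeister moves and the components of \( \mathfrak{E} \) lie in the sub-surface \( \Sigma_{\min} \), so each component of \( \mathfrak{F} \) meets \( \gamma \) an even number of times; thus \( \gamma \) satisfies the hypotheses of \Cref{Def:gcolour} and determines a \( \mathscr{C} \)-parity. Second, \( \gamma \) is disjoint from \( \mathfrak{E} \), so every crossing of \( E \) is even and parity projection fixes \( E \).

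With this curve in hand I would apply \Cref{Prop:gparity2} to run the \( \mathscr{C} \)-parity along the sequence from \( F \) to \( E \) and project. Since projection fixes \( E \), the projected diagram \( p(F) \) is joined to \( E \) by generalised Reidemeister moves and therefore represents \( L \); and \( p(F) \) is by construction the subdiagram of \( F \) obtained by virtualising the odd crossings. That this subdiagram is \emph{proper} follows from the fact that \( \Sigma \), being the Carter surface of \( F \), is filling: if \( \mathfrak{F} \) were disjoint from the essential curve \( \gamma \), then \( \gamma\times I \) would be a compressing annulus, contradicting the minimality of the Carter surface; hence at least one crossing straddling \( \gamma \) is odd and is deleted. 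Finally, deleting chords from a Gauss diagram cannot increase the genus of the associated Carter surface, so \( \mathrm{genus}(p(F))\le\mathrm{genus}(F) \).

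The hard part will be upgrading this last inequality to a \emph{strict} one, i.e.\ showing that projecting out the crossings that straddle an essential handle genuinely lowers the Carter genus. I would attack this through the topological description of \Cref{Sec:top}: parity projection along \( \gamma \) is the passage to the double cover of \( \Sigma\times I \) cut along \( \gamma\times I \). Care is needed, because this double cover is only a \emph{supporting} surface for \( p(F) \) and is not its Carter surface, so the reduction must be read off at the level of the Gauss diagram, where removing a chord drops the genus precisely when it is genus-essential (when it raises the number of boundary circles of the ribbon neighbourhood). The crux is therefore to certify that among the odd crossings, which straddle the essential non-separating curve \( \gamma \), at least one is a genus-essential chord — intuitively, the crossings carrying the homological essentiality of the handle encircled by \( \gamma \). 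Establishing this, and thereby closing the induction, is where I expect the real work to lie.
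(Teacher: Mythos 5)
Your overall strategy---project along a \( \gamma \)-colouring attached to a destabilizing curve, note that the projection is a subdiagram still representing \( L \), and iterate---is the same as the paper's, but your induction is set up on the wrong quantity, and the step you defer as ``the real work'' is a genuine gap rather than a routine detail. By extremising over subdiagrams of \emph{least Carter genus} you oblige yourself to prove that a single projection step \emph{strictly} decreases the genus. The paper never proves, needs, or even asserts any genus monotonicity: it iterates on the classical crossing number, showing only that each round yields a \emph{proper} subdiagram of the previous diagram that still represents \( L \); the process must therefore terminate, and it can only terminate at a minimal genus diagram because Kuperberg's theorem supplies a fresh destabilizing curve whenever the genus is not yet minimal. (Extremising over subdiagrams with \emph{fewest classical crossings} would have let you dispense with the strict genus drop altogether.) Your stepping-stone inequality \( \operatorname{genus}(p(F)) \le \operatorname{genus}(F) \) is also asserted without justification and is not supported by the local count: for a Gauss diagram with \( c \) chords and \( f \) complementary faces the Carter genus is \( (2 - f + c)/2 \), deleting one chord changes it by \( (-1-\Delta f)/2 \), and \( \Delta f = -3 \) (four faces at the crossing coalescing into one) is not excluded, so chord deletion can raise the genus. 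Both halves of your core claim are therefore open.

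Your properness argument fails as well. From ``\( \mathfrak{F} \) is not disjoint from the essential curve \( \gamma \)'' you infer ``at least one crossing straddling \( \gamma \) is odd,'' but this is a non sequitur: \( \gamma \) may meet the diagram in two points lying on a single edge between consecutive crossings, in which case the colouring is nonconstant yet every crossing is monochromatic, \( p(F)=F \), and the step produces nothing. Your setup compounds this: after passing to a common stabilisation, \( \Sigma \) is no longer the Carter surface of \( F \), so the ``filling'' argument does not apply, and the core of an ``extra handle'' may well be disjoint from \( \mathfrak{F} \). The paper avoids all of this by choosing \( \gamma \) via a \emph{shortest} sequence of Reidemeister moves on the Carter surface of \( D \) itself ending in a destabilizable diagram (which keeps the intermediate diagrams cellularly embedded), and by proving the technical lemma your sketch is missing: the projected sequence \Cref{Eq:s2} preserves the homeomorphism type of the Carter surface, verified for even \( RII \) moves using the double cover of \Cref{Sec:top}. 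Properness then follows by contradiction: if \( D \) had no odd crossing, \( p(D)=D \) would have Carter surface \( \Sigma \) while \( p(D_m)=D_m \) has the destabilisation of \( \Sigma \) along \( \gamma \), contradicting preservation. Without that lemma, neither properness nor any control over \( p(F) \) is available, so the proposal does not close.
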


\begin{proof}
	Let \( D \) be a diagram of the virtual link \(L\), and \( \Sigma \) the Carter surface of \( D \). Denote by \( \mathfrak{D} \) the diagram on \( \Sigma \) defined by \( D \). If \( \Sigma \) realises the supporting genus of \( L \) then \( D \) is the desired minimal genus subdiagram.
	
	Suppose that \( \Sigma \) does not realise the supporting genus of \( L \). By Kuperberg's Theorem \( \mathfrak{ D } \) is related by a finite sequence of Reidemeister moves on \( \Sigma \) to a diagram \( \mathfrak{D}' \), such that there exists an essential simple closed curve, \( \gamma \subset \Sigma \), disjoint to \( \mathfrak{D}' \). Denote this sequence as
	\begin{equation}\label{Eq:ss}
	\mathfrak{D} = \mathfrak{D}_1 \rightarrow \mathfrak{D}_2 \rightarrow \cdots \rightarrow \mathfrak{D}_n = \mathfrak{D} '.
	\end{equation}
	Without loss of generality, we may assume that \(\gamma\) is such that \Cref{Eq:ss} is the shortest possible sequence of this kind. It follows that \(\mathfrak{D}_i\) is cellularly embedded for \(1\leq i < n\) (that is, the complement of a neighbourhood of \( \mathfrak{D}_i \) in \( \Sigma \) is a disjoint union of discs).
	
	The sequence of \Cref{Eq:ss} defines a sequence of virtual link diagrams related by generalised Reidemeister moves, denoted as
	\begin{equation}\label{Eq:sd}
	D = D_1 \rightarrow D_2 \rightarrow \cdots \rightarrow D_m = D ',
	\end{equation}
	with \( m \geq n \). Our assumption on the length of the sequence of \Cref{Eq:ss} implies that \( \mathfrak{D}_{n-1} \rightarrow \mathfrak{D}_n \) is an \( RII \) move of the form
	\begin{equation*}
	\raisebox{-38pt}{\includegraphics[scale=0.8]{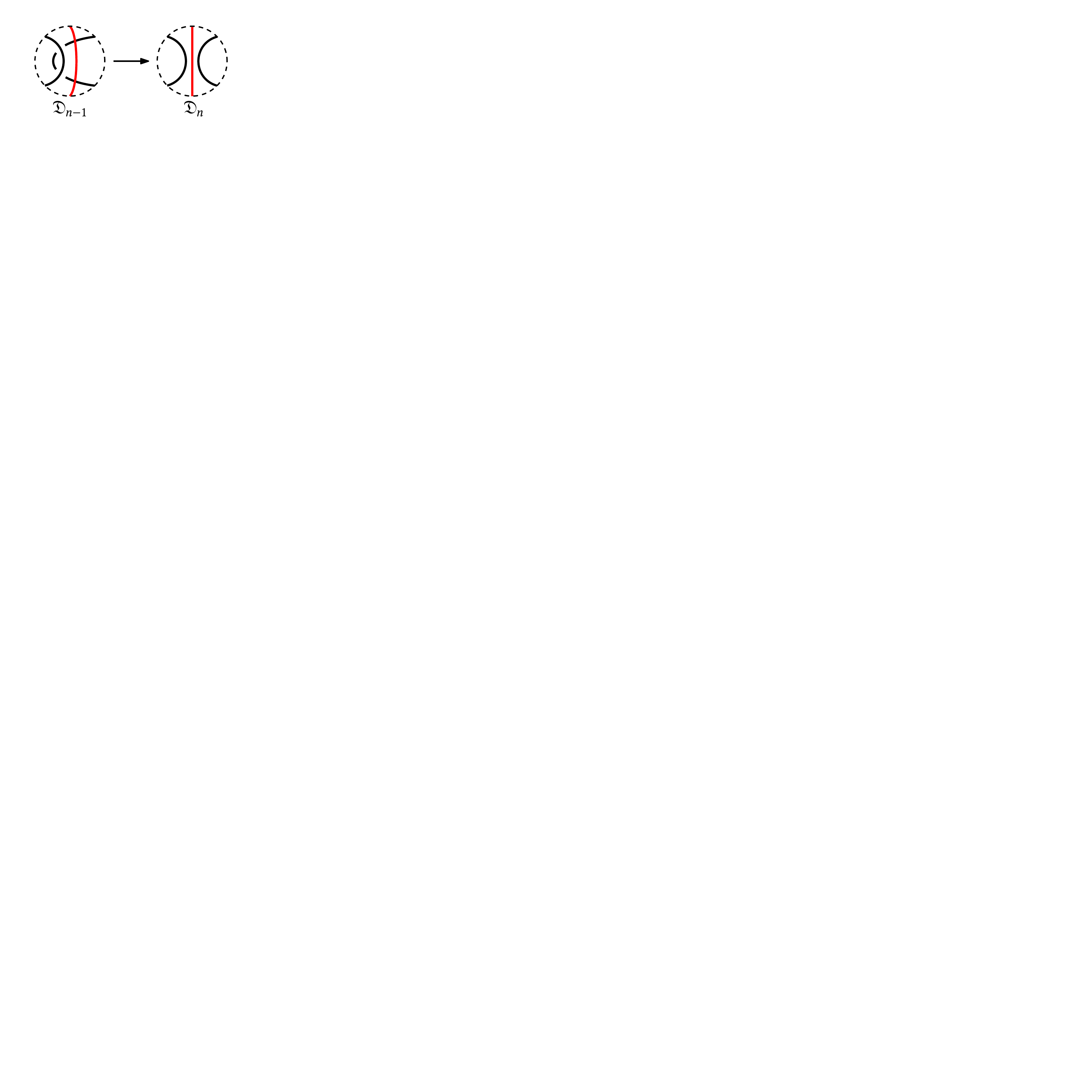}}.
	\end{equation*}
	We may further assume that \( \gamma \) (depicted by the red arc) is disjoint to \( \mathfrak{D}_{n-1} \) outside of the disc neighbourhood depicted above. Observe that \( \mathfrak{D}_{n-1} \) has intersection number \(0\) with \( \gamma \), and that there exists a \( \gamma \)-colouring of \( \mathfrak{D}_{n-1} \) such that all components possess the same colour, except in the region supporting the \(RII\) move, where the colouring is:
	\begin{equation*}
	\raisebox{-28pt}{\includegraphics[scale=0.85]{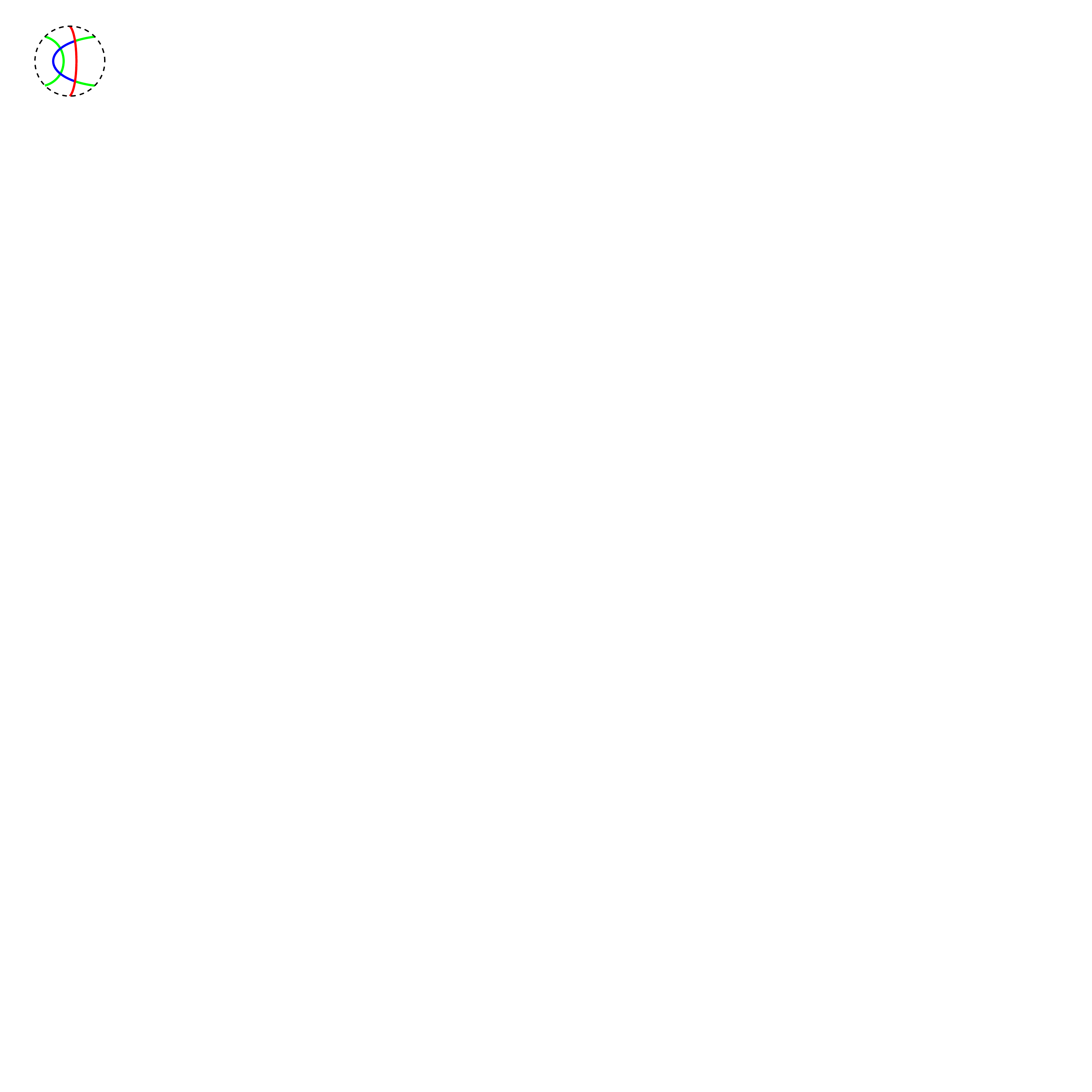}}.
	\end{equation*}
	Denote this distinguished \( \gamma \)-colouring by \( \mathscr{C} \). Every diagram \( \mathfrak{D}_i \) in the sequence possesses a \( \gamma \)-colouring induced by \( \mathscr{C} \); by abuse of notation also denote these \( \gamma \)-colourings by \( \mathscr{C} \). Notice that every crossing of \( \mathfrak{D}_n \) is even with respect to the parity \( f_{\mathscr{C}} \). The same is true for \( D_m \). 
	
	This analysis of the sequence of \Cref{Eq:ss}, combined with \Cref{Prop:gparity2}, allows us to project the sequence of \Cref{Eq:sd} to obtain a new sequence
	\begin{equation}\label{Eq:s2}
	p(D) = p(D_1) \rightarrow p(D_2) \rightarrow \cdots \rightarrow p(D_m) = p(D '),
	\end{equation}
	where \( p (D_i) \) denotes the virtual link diagram obtained from \( D_i \) via parity projection with respect to \( f_{\mathscr{C}} \). Every crossing of \( D_m \) is even with respect to \( f_{\mathscr{C}} \), so that \( p ( D_m ) = D_m \), and we may concatenate the sequence of \Cref{Eq:sd} with that of \Cref{Eq:s2} (in reverse) to obtain a sequence of generalised Reidemeister moves from \( D \) to \( p(D) \). Thus \( D \) and \( p(D) \) both represent the virtual link \(L\).
	
	We claim that the sequence of \Cref{Eq:s2} preserves the Carter surface. That is, that the diagrams \( p(D_i) \) and \( p(D_j) \) have homeomorphic Carter surfaces, for all \( 1 \leq i,j \leq m \). To see this, first recall that the only generalised Reidemeister move that may possibly alter the homeomorphism type of the Carter surface is an \( RII \) move. Suppose that \( p ( D_{i} ) \rightarrow p ( D_{i+1} ) \) is an \( RII \) move; then \( D_{i} \rightarrow D_{i+1} \) is an \( RII \) move involving even crossings (with respect to \( f_{\mathscr{C}} \)).
	
	To verify that \( p ( D_{i} ) \rightarrow p ( D_{i+1} ) \) preserves the Carter surface we employ the topological definition of the \( \mathscr{C} \)-parity. The move \( D_{i} \rightarrow D_{i+1} \) is associated to a move \( \mathfrak{D}_{j} \rightarrow \mathfrak{D}_{j+1} \) of \Cref{Eq:ss}. By assumption this latter move occurs on a disc neighbourhood of \( \Sigma \), and \( \mathfrak{D}_{j} \), \( \mathfrak{D}_{j+1} \) are cellularly embedded. It follows that there is a disc, \( \Delta \), as indicated in \Cref{Fig:oddr2}. Suppose that \( \pi : \widetilde{\Sigma} \to \Sigma \) is the double cover associated to \( \gamma \) (as described in \cref{Sec:top}); one component of \( {\pi}^{-1} ( \Delta ) \) must be as depicted in \Cref{Fig:oddr2}.
	
	Let \( \widetilde{\mathfrak{D}}_{j} \), \( \widetilde{\mathfrak{D}}_{j+1} \) denote the lifts of \(\mathfrak{D}_{j} \), \( \mathfrak{D}_{j+1} \) prescribed by \( \mathscr{C} \) (the diagrams \( \widetilde{\mathfrak{D}}_{j} \), \( \widetilde{\mathfrak{D}}_{j+1} \) are diagrams on \( \widetilde{\Sigma} \)). As described in \Cref{Sec:top}, lifting with respect to \( \pi \) is equivalent to parity projection with respect to \( f_{\mathscr{C}} \). In particular, \( \widetilde{\mathfrak{D}}_{j} \) and \( p ( D_{i} ) \) have the same Gauss diagram, as do \( \widetilde{\mathfrak{D}}_{j+1} \) and \( p ( D_{i+1} ) \). It follows that we may obtain the Carter surface of \( p ( D_{i} ) \) by cutting out a neighbourhood of \( \widetilde{\mathfrak{D}}_{j}  \) in \( \widetilde{\Sigma} \), and capping its boundary with discs; the Carter surface of \( p ( D_{i+1} ) \) is obtained in the same manner from \( \widetilde{\mathfrak{D}}_{j+1}  \).
	
	Depending on whether the \( RII \) move \( \widetilde{\mathfrak{D}}_{j} \rightarrow \widetilde{\mathfrak{D}}_{j+1} \) adds or removes crossings, one of the diagrams involved is as the top-right diagram in \Cref{Fig:oddr2}. Notice that the arcs of this diagram involved in the \( RII \) move are part of the boundary of a single disc component of \( {\pi}^{-1} ( \Delta ) \). It follows that \( p ( D_{i} ) \) and \( p ( D_{i+1} ) \) have homeomorphic Carter surfaces, and the sequence of \Cref{Eq:s2} preserves the Carter surface.
	
	\begin{figure}
		\includegraphics[scale=0.5]{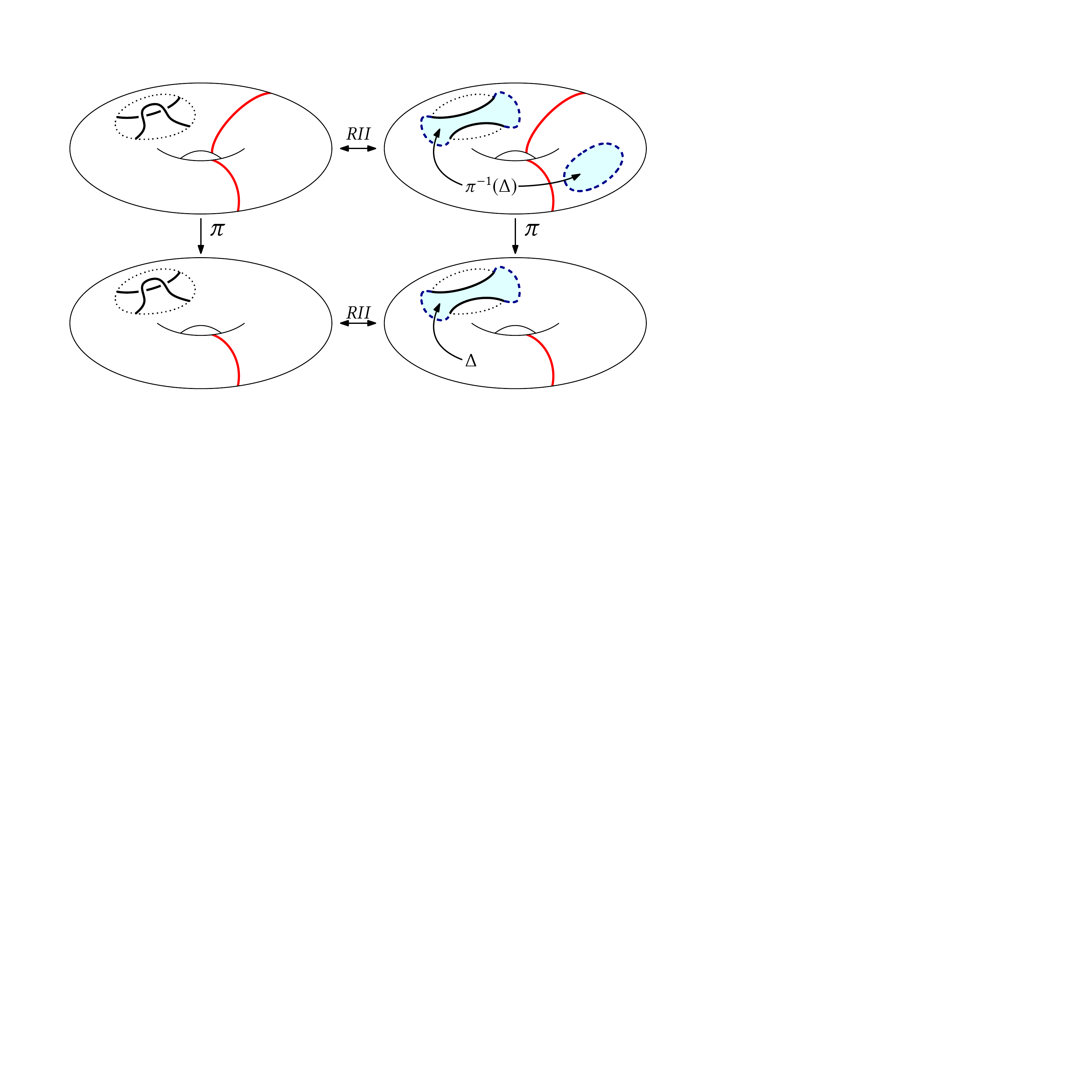}
		\caption{Lifting an \( RII \) move involving even crossings, using the cover associated to \( \gamma \).}
		\label{Fig:oddr2}
	\end{figure}
	
	Next, we claim that \( D \) contains an odd crossing with respect to \( f_{\mathscr{C}} \), so that \( p(D) \) is a proper subdiagram of \(D\) that represents \(L\).
	
	Assume towards a contradiction that \( D \) does not contain an odd crossing with respect to \( f_{\mathscr{C}} \). Then \( p(D) = D \), so that \( p(D) \) has Carter surface \( \Sigma \) also. As \( p(D_m) = D_m \) the diagram \( p(D_m) \) has Carter surface \( \Sigma ' \), obtained from \( \Sigma \) by destabilizing along \( \gamma \). This destabilization must change the homeomorphism type of \( \Sigma \): if \( \gamma \) is non-separating then \( \Sigma \) and \( \Sigma ' \) have different genera, and if \( \gamma \) is separating then \( \Sigma \) and \( \Sigma ' \) have a different number of connected components. It follows that the Carter surfaces of \( p(D) \) and \( p(D_m) \) are not homeomorphic. But the sequence of \Cref{Eq:s2} (starting at \( p(D) \) and ending at \( p(D_m) \)) preserves the homeomorphism type of the Carter surface, hence the desired contradiction.
	
	In conclusion, we have produced \( p(D) \) that represents \( L \) and is a proper subdiagram of \( D \). If \( p(D)\) is not a minimal genus diagram of \( L \), then we may repeat the process described above (with a different curve in place of \( \gamma \), guaranteed to exist by Kuperberg's Theorem). The proof is completed by iterating this process: after a finite number of iterations a minimal genus subdiagram of \( D \) is obtained, that must also represent \(L\) (the number of iterations required is bounded above by the number of classical crossings of \( D \)). 
\end{proof}

\begin{proof}[Proof of \Cref{Thm:main}]
	Suppose that \( D \) realises the crossing number of \( L \). Apply \Cref{Thm:sub} to obtain a new diagram of \(L\), denoted \( D' \). It is guaranteed that \( D' \) is minimal genus and a subdiagram of \( D \). By hypothesis \( D \) is of minimal crossing number, hence \( D = D' \), so that \( D \) is a minimal genus diagram.
\end{proof}

\subsection{Realising the bridge and ascending numbers on minimal genus diagrams}\label{Sec:lem}
We present further examples of the utility of \Cref{Thm:sub}. First, we use it to prove that the bridge number of a virtual link is realised on a minimal genus diagram. This extends the corresponding result in the case of virtual knots due to Chernov \cite{Chernov2013} and Manturov \cite{Manturov13}. 

A \emph{bridge} of a virtual link diagram is an arc that contains one or more overcrossings (and any number of virtual crossings). The \emph{bridge number} of a link \(L\) is the minimum number of bridges over all diagrams for \(L\) (for further details see \cite{Hirasawa2011}). It is \emph{a priori} conceivable that a classical link may admit a virtual link diagram with fewer bridges than any of its classical diagrams: we use \Cref{Thm:sub} to show that this cannot occur.

\begin{proposition}\label{Prop:bridge}
	The bridge number of a virtual link is realised on a minimal genus diagram. In particular, the bridge number of a classical link does not decrease when it is considered as a virtual link.
\end{proposition}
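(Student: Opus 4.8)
The plan is to combine \Cref{Thm:sub} with an elementary monotonicity property of the bridge number under virtualization of classical crossings. The crucial observation I would isolate first is the following: if a virtual link diagram \( D' \) is a subdiagram of \( D \), then the number of bridges of \( D' \) is at most the number of bridges of \( D \). It suffices to prove this when \( D' \) is obtained from \( D \) by virtualizing a single classical crossing \( c \), and then to iterate over the crossings that \Cref{Thm:sub} converts.

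To establish this single-crossing case I would record the over/under data of \( D \) as a cyclic sequence on each component, marking each classical crossing by its overpass and underpass points and ignoring virtual crossings (which a bridge may contain freely). A bridge is then precisely a maximal run of overpass points uninterrupted by underpass points, so the number of bridges of the diagram equals the total number of such maximal overpass runs across all components. Virtualizing \( c \) deletes exactly one overpass mark and one underpass mark from these cyclic sequences. Deleting an overpass mark either shortens a run (no change) or removes a length-one run (decrease by one); deleting an underpass mark either lies inside a block of underpasses (no change) or merges two adjacent overpass runs into one (decrease by one). In every case the count cannot increase, and treating the two deletions sequentially gives the lemma.

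With the lemma in hand the proposition follows quickly. I would take a diagram \( D \) realizing the bridge number of \( L \) and apply \Cref{Thm:sub} to produce a subdiagram \( D' \) that represents \( L \) and has minimal genus. By the lemma the number of bridges of \( D' \) is at most that of \( D \), which equals the bridge number of \( L \); since \( D' \) is itself a diagram of \( L \), its number of bridges is at least the bridge number of \( L \). Hence \( D' \) realizes the bridge number on a minimal genus diagram. For the final assertion, if \( L \) is classical then its supporting genus is \( 0 \), so the Carter surface of \( D' \) is \( S^2 \) and \( D' \) is therefore a classical diagram; consequently the bridge number computed over all virtual diagrams agrees with the classical bridge number, and considering \( L \) as a virtual link cannot decrease it.

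The only content beyond \Cref{Thm:sub} is the monotonicity lemma, so that is where I would locate the (modest) main obstacle. Its sole subtlety is the bookkeeping of the two simultaneous deletions, one overpass and one underpass, together with the verification that a bridge may absorb arbitrarily many virtual crossings, so that virtual crossings created or destroyed along the way never affect the count. I do not expect this step to present any genuine difficulty, the substantive work having already been carried out in the proof of \Cref{Thm:sub}.
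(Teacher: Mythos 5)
Your proposal is correct and follows the paper's proof exactly: apply \Cref{Thm:sub} to a bridge-minimising diagram and observe that virtualizing classical crossings cannot increase the number of bridges. The paper states this monotonicity as a one-line observation, whereas you spell it out via the overpass/underpass run count; that elaboration is accurate but not a different argument.
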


\begin{proof}
	Let \( D \) be a diagram that realises the bridge number of a virtual link \( L \). Apply \Cref{Thm:sub} to \( D \) to produce a minimal genus subdiagram of \( D \), that also represents \( L \), and observe that changing classical crossings to virtual crossings cannot increase the bridge number of a diagram.
\end{proof}

The meridional rank conjecture \cite[Problem 1.11]{Kirby} posits that the bridge number of a classical link is equal to the meridional rank of the link group. An affirmative answer to this conjecture would provide an alternative proof of the fact that the bridge number of a classical link does not decrease when it is considered as a virtual link (as established in \Cref{Prop:bridge}). In fact, the meridional rank conjecture implies the stronger statement that the bridge number of a classical link does not decrease when it is considered as a \emph{welded} link. (For further details see \cite{Boden2015}.)

Next, we consider the ascending number, a numerical invariant introduced by Ozawa \cite{Ozawa2010} (also known as the \emph{warping degree} \cite{Shimizu2011}). The definition readily extends to virtual links, as follows. An oriented virtual knot diagram is said to be \emph{ascending} if one encounters only undercrossings, or crossings that have previously been met, when traversing the diagram from an arbitrary basepoint. There is a similar definition for oriented virtual link diagrams (given basepoints on and an ordering of the link components). The ascending number of an oriented virtual link diagram \( D \), denoted \( a(D) \), is the minimum number of crossing changes needed to make it ascending. The ascending number of an oriented virtual link is the minimal ascending number of a diagram representing the link.

\Cref{Thm:sub} applies to show that the ascending number of a classical link is preserved when passing to the virtual category.  
\begin{proposition}\label{Prop:asc}
The ascending number of a virtual link is realised on a minimal genus diagram. The ascending number of a classical link does not decrease when it is considered as a virtual link.
\end{proposition}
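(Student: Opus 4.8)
The plan is to mirror the proof of \Cref{Prop:bridge}, replacing the monotonicity of bridge number under virtualization with the corresponding statement for ascending number. First I would take an oriented diagram \( D \) that realises the ascending number of the oriented virtual link \( L \), together with the auxiliary data (basepoints and a component ordering) witnessing \( a(D) = a(L) \). Applying \Cref{Thm:sub} produces a subdiagram \( D' \) of \( D \) that also represents \( L \) and is of minimal genus. Since \( D' \) is obtained from \( D \) by converting a subset of classical crossings to virtual crossings, it inherits the orientation of \( D \); crucially, virtualization leaves the underlying curves unchanged, so the number and ordering of the components of \( D' \) agree with those of \( D \), and the auxiliary data transfers directly.

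The crux is the ascending analogue of the bridge monotonicity used in \Cref{Prop:bridge}, namely the claim that \( a(D') \le a(D) \). To establish it I would fix a set \( S \) of \( a(D) \) crossing changes making \( D \) ascending, and restrict attention to \( S' = S \cap (\text{crossings of } D') \), a set of at most \( a(D) \) crossing changes on \( D' \). I would then verify that \( S' \) makes \( D' \) ascending: traversing \( D' \) from the same basepoints in the same order visits the retained classical crossings in the order induced from \( D \), and at each such crossing the ascending condition---that the first passage is an undercrossing, or that the crossing has already been met---holds after applying \( S' \) exactly as it did in \( D \) after applying \( S \). Virtualizing a crossing only deletes a constraint from the ascending condition and never introduces a new one, so no further crossing changes are required. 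Hence \( a(D') \le |S| = a(D) = a(L) \); since \( a(L) \) is the minimum over all diagrams of \( L \) we also have \( a(D') \ge a(L) \), and therefore \( a(D') = a(L) \). As \( D' \) is of minimal genus, the first statement follows.

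For the second statement I would observe that a classical link \( L \), regarded as a virtual link, has supporting genus zero, so any minimal genus diagram of \( L \) is a genus-zero, hence classical, diagram. By the first statement the virtual ascending number of \( L \) is realised on such a minimal genus diagram, which is therefore a classical diagram, and its ascending number is at least the classical ascending number of \( L \). Combined with the reverse inequality---classical diagrams are in particular virtual diagrams---this gives equality, so the ascending number does not decrease upon passage to the virtual category.

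The step I expect to be the main obstacle is the careful bookkeeping in the crux claim: one must confirm that the ``already met'' clause of the ascending condition is genuinely preserved under passing from \( S \) to \( S' \), since deleting a classical crossing could conceivably affect the order in which the surviving crossings are first encountered. I anticipate this reduces to the observation that virtualization simply removes a chord from the Gauss diagram of \( D \) without reordering the traversal of the remaining chords, so that the first-passage data of each retained crossing is unchanged.
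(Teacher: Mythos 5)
Your proposal is correct and follows the paper's own argument: apply \Cref{Thm:sub} to a diagram realising the ascending number and invoke the monotonicity \( a(D') \leq a(D) \) for a subdiagram \( D' \) of \( D \), which the paper states as an immediate observation and you justify in detail via the unchanged traversal order of the retained crossings. The extra bookkeeping you supply (including the deduction of the classical statement from minimal genus zero) is exactly the content the paper leaves implicit.
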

 
\begin{proof}
	The claim follows easily from \Cref{Thm:sub}: notice that if \(D\) is a virtual link diagram with subdiagram \(D'\) then \(a(D') \leq a(D).\)
\end{proof}

We conclude this note by advertising the following open question. The \emph{unknotting number} of a classical or virtual knot \( K \) is the minimal number of crossing changes \( \raisebox{-4pt}{\includegraphics[scale=0.35]{cc1.pdf}} \to \raisebox{-4pt}{\includegraphics[scale=0.35]{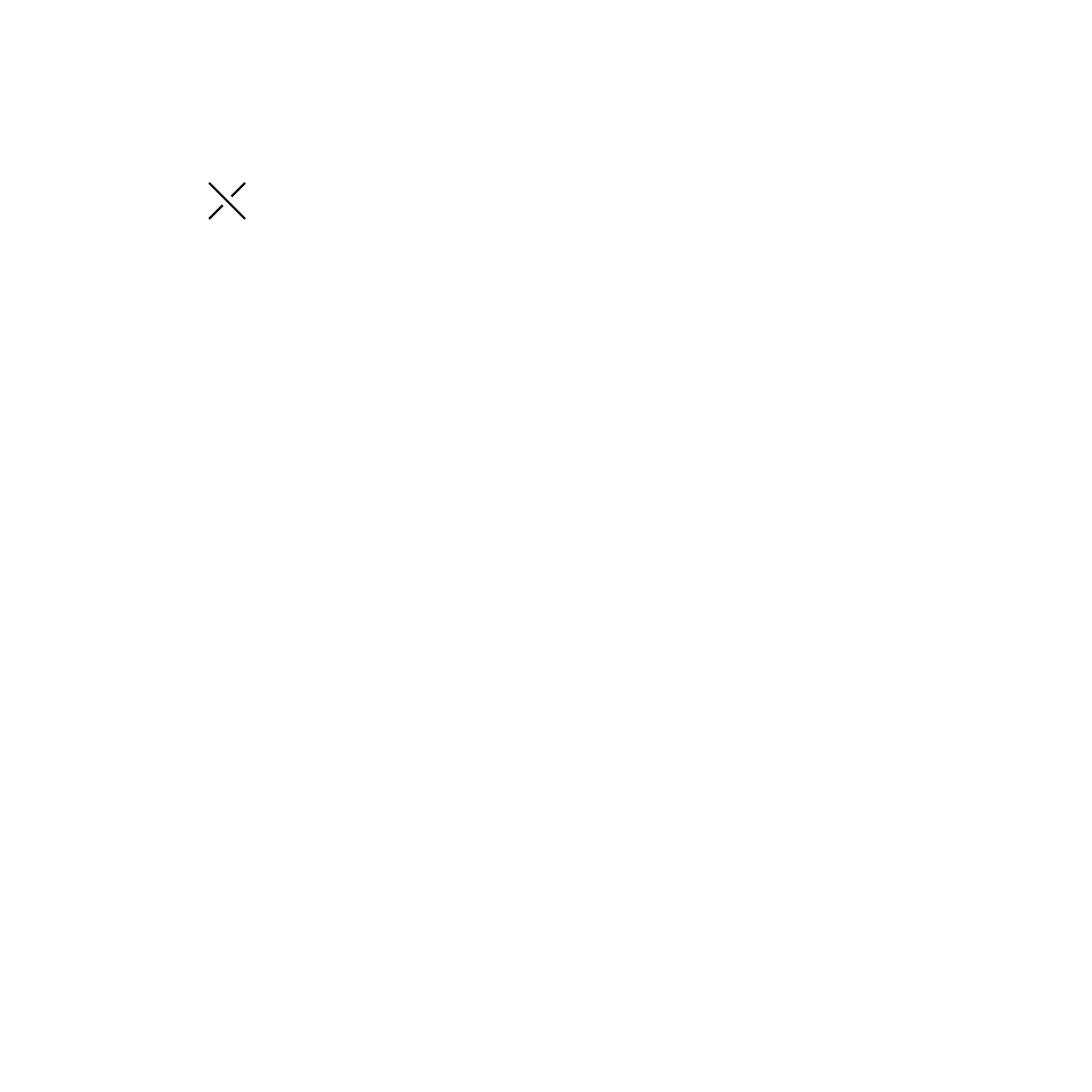}} \) needed to convert a diagram of \( K \) to an unknot diagram. The \emph{unlinking number} of a virtual link \( L \) is defined similarly. Not all virtual knots (links) can be unknotted (unlinked) by crossing change, in which case the unknotting (unlinking) number is defined to be infinite.

\begin{question}
	Is the unknotting number of a classical knot preserved when it is considered as a virtual knot? Is the same true for the unlinking number of a classical link? For virtual knots and links, are the unknotting and unlinking numbers attained on minimal genus diagrams?
\end{question}
The unknotting number of a classical knot is bounded above by its ascending number \cite{Ozawa2010}, so that \Cref{Prop:asc} provides evidence in favour of a positive answer to the classical cases of the question posed above.

Another interesting open question is obtained by replacing `unlinking' with `splitting'. (Again, not all virtual links can be split by crossing change, in which case the splitting number is defined to be infinite.)

Finally, we note the operation of converting classical crossings to virtual crossings is an unknotting, unlinking, and splitting operation for virtual links. It is interesting to consider questions analogous to those above in the context of this operation.
 
\bibliographystyle{plain}
\bibliography{library}

\end{document}